\definecolor{mountain}{RGB}{16, 172, 132}  % green
\definecolor{joute}{RGB}{46, 134, 222}  % blue
\newcommand{\problemtitle}[1]{\gdef\@problemtitle{#1}}% Store problem title
\newcommand{\probleminput}[1]{\gdef\@probleminput{#1}}% Store problem input
\newcommand{\problemquestion}[1]{\gdef\@problemquestion{#1}}% Store problem question
	\par\addvspace{.5\baselineskip}
	\par\addvspace{.5\baselineskip}}
	\par\addvspace{.5\baselineskip}
	\par\addvspace{.5\baselineskip}}
\newtheorem{theorem}{Theorem}[section]
\newtheorem{lemma}{Lemma}
\newtheorem{proposition}{Proposition}
\newtheorem{definition}{Definition}
\newtheorem{corollary}{Corollary}
\newtheorem{remark}{Remark}
\newtheorem{example}{Example}
\newcommand{\A}{\mathcal{A}}  % set of arguments
\newcommand{\B}{\mathcal{B}}  % set of arguments 
\newcommand{\R}{\mathcal{R}}  % attack relation
\newcommand{\AF}{F}  % agumentation framework
\newcommand{\SD}{\mathrm{SD}}  % self-defending
\newcommand{\PREF}{\mathrm{PREF}}  % preferred extensions
\newcommand{\NAIV}{\mathrm{NAIV}}  % naive extensions
\newcommand{\ADM}{\mathrm{ADM}}  % admissible sets
\newcommand{\CF}{\mathrm{CF}}  % conflict-free
\newcommand{\IRR}{\mathrm{IRR}}  % irreducible
\newcommand{\I}{\mathcal{I}}  % family of sets
\newcommand{\F}{\mathcal{F}}  % closure system
\renewcommand{\S}{\mathcal{S}}  % self-attacking arguments
\newcommand{\IS}{\Sigma}  % implicational system
\newcommand{\card}[1]{\vert #1 \vert}  % size of a set
\newcommand{\poly}{\csf{poly}}  % polynomial
\newcommand{\imp}{\rightarrow}  % implication arrow
\newcommand{\cc}[1]{\mathcal{#1}}  % calligraphic
\newcommand{\csf}[1]{\textsf{#1}}  % sans serif
\newcommand{\csmc}[1]{\textsc{#1}}  % small caps
\newcommand{\ctt}[1]{\texttt{#1}}  % text tt
\title{On the preferred extensions of argumentation frameworks: bijections with naive 
sets}
\author[1]{Mohammed Elaroussi}
\author[2]{Lhouari Nourine}
\author[1]{Mohammed Said Radjef}
\author[2, 3]{\\ Simon Vilmin}
\affil[1]{Unit\'e de Recherche LaMOS. Facult\'e des Sciences Exactes. Universit\'e de 
Bejaia, 06000 Bejaia, Algeria}
\affil[2]{Universit\'e Clermont-Auvergne, CNRS, Mines de Saint-\'Etienne, 
	Clermont-Auvergne-INP, LIMOS, 63000 Clermont-Ferrand, France.}
\affil[3]{Univ Lyon, INSA Lyon, CNRS, UCBL, Centrale Lyon, Univ Lyon 2, LIRIS,
UMR5205, F-69621 Villeurbanne, France.}
\begin{document}
	\maketitle	
	
\begin{abstract}
This paper deals with the problem of finding the preferred extensions of an argumentation 
framework by means of a bijection with the naive sets of another framework.
First, we consider the case where an argumentation framework is naive-bijective: its 
naive sets and preferred extensions are equal.
Recognizing naive-bijective argumentation frameworks is hard, but we show that 
it is tractable for frameworks with bounded in-degree.
Next, we give a bijection between the preferred extensions of an argumentation framework 
being admissible-closed (the intersection of two admissible sets is admissible) 
and the naive sets of another framework on the same set of arguments.
On the other hand, we prove that identifying admissible-closed argumentation frameworks 
is \csf{coNP}-complete.
At last, we introduce the notion of irreducible self-defending sets as those that are not 
the union of others. 
It turns out there exists a bijection between the preferred extensions of an 
argumentation framework and the naive sets of a framework on its irreducible 
self-defending sets.
Consequently, the preferred extensions of argumentation frameworks with some lattice 
properties can be listed with polynomial delay and polynomial space.

\paragraph{Keywords:} argumentation framework, preferred extensions, naive sets, casting,
irreducible self-defending sets.
\end{abstract}

\section{Introduction}
\label{sec:introduction}

Abstract argumentation frameworks \cite{dung1995acceptability} have become a major 
research trend within the field of 
artificial intelligence \cite{bench2007argumentation,rahwan2009argumentation}. 
They also provide tools to study other areas such as decision support systems 
\cite{amgoud2009using}, machine learning \cite{cocarascu2016argumentation}, and agent 
interaction in multi-agent systems \cite{mcburney2009dialogue}.  
In particular, Dung \cite{dung1995acceptability} shows how $n$-person games 
\cite{von1947theory}, logic default reasoning \cite{pollock1987defeasible} and logic 
programming \cite{apt1990logic} can be seen as instances of abstract argumentation.

Informally, an abstract argumentation framework consists of a set 
of arguments together with a binary relation representing attacks between arguments.
The aim of an argument framework is to evaluate the acceptability (or the relevance) of 
groups of arguments.
On this purpose, Dung \cite{dung1995acceptability} introduces \emph{admissible sets}.
An admissible set is a collection of arguments that are not attacking each other 
(conflict-free set) and that are counterattacking all externals attackers 
(self-defending set).
Then, Dung distinguishes several types of admissible sets called \emph{extensions}, which 
express different perspectives on arguments that are collectively acceptable.
Examples of extensions are preferred, stable, complete, or grounded extensions.
In particular, preferred extensions represent inclusion-wise maximal admissible sets, and 
hence the largest (\textit{w.r.t} set-inclusion) acceptable collections of arguments.
We illustrate these notions with the help of an example of Baroni et al. 
\cite{baroni2020acceptability} about vegetarianism.
\begin{example}
Let us consider the next four arguments:
\begin{itemize} \itemsep-0.2em
\item[(A)] \textit{``a vegetarian diet should be adopted because it is generally 
healthier''};
\item[(B)] \textit{``a vegetarian diet should not be adopted because it can promote 
specific health problems''};
\item[(C)] \textit{``we should not eat animals because animals have rights because they 
are sentient beings''};
\item[(D)] \textit{``animals cannot possess rights because they have no moral judgment''}.
\end{itemize}
Following Baroni et al. \cite{baroni2020acceptability}, the arguments $A$ and $B$ attack 
each other as well as the arguments $C$ and $D$, and $C$ attacks $B$.

The set $\{D, B\}$ is admissible: neither $D$ nor $B$ attacks the other (or 
itself) and $D$ defends $B$ against $C$.
In fact, $\{D, B\}$ is preferred as adding either $A$ or $C$ would break the 
conflict-free set property. 
On the other hand, $\{B\}$ is not admissible since there is no defense against $C$. 
The set $\{C,D\}$ is not admissible either as $C$ attacks $D$.
\end{example}

Naturally, extensions are at the core of numerous decision and search 
problems revolving around argumentation frameworks.
Unfortunately, most of these problems are hard to solve for several argumentation 
extensions \cite{dvovrak2017computational, gaggl2020design, kroll2017complexity}.
Nevertheless, these hardness results have motivated the search for tractable classes of 
argumentation frameworks among which we quote the frameworks being without even cycles 
\cite{dunne2001complexity}, symmetric \cite{coste2005symmetric} or 
bipartite \cite{dunne2007computational}.

In this paper, we are interested in the enumeration of the preferred extensions 
of an argumentation framework, being a hard problem \cite{dimopoulos1996graph}.
Even though preferred extensions are only as informative as (maximal) 
conflic-free sets within the context of logic-based frameworks 
\cite{caminada2011limitations, amgoud2012stable}, their enumeration
remains one of the most general problem in argumentation frameworks.
Indeed, several recent works contributed to this task as well as the enumeration of other 
extensions in argumentation frameworks \cite{ alfano2019scaling,  
cerutti2018impact, charwat2015methods, elaroussi:hal-03184819}. 
Enumerating preferred extensions provides complete information concerning the 
justification status of 
arguments and allows to easily resolve other significant argumentation frameworks 
problems.

In this context, our strategy draws inspiration from the work of Dunne \textit{et al.}
\cite{dunne2015characteristics} on realizability and recasting problems for argumentation 
frameworks.
More precisely, we seek to express the set of preferred extensions of a given framework 
using a bijective correspondence with the inclusion-wise maximal conflict-free 
sets, called the naive sets, of another argumentation 
framework. 
For example, symmetric argumentation frameworks admit a bijection between their 
naive sets and 
preferred extensions \cite{coste2005symmetric}.
Since it is possible to list the naive sets of an argumentation framework with 
polynomial delay and space \cite{johnson1988generating}, our approach can be used to 
identify new classes of frameworks where the preferred extensions can be enumerated 
efficiently.
On this purpose, we obtain the following contributions:

\paragraph{1. Naive-bijective argumentation frameworks.} (Section 
\ref{sec:naive-bijective }).
An argumentation framework is \emph{naive-bijective} if its naive sets are 
equal 
to its 
preferred extensions.
For instance, symmetric argumentation frameworks are naive-bijective  
\cite{coste2005symmetric}. This is also the case of every logic-based argumentation framework, as 
demonstrated for example by Amgoud \cite{amgoud2012stable}.
Nevertheless, the class of naive-bijective argumentation frameworks is not very well 
characterized.
We give an exact characterization of naive-bijective  argumentation frameworks.
We deduce an algorithm to recognize this class of argumentation frameworks, running 
in 
polynomial time in frameworks with bounded in-degree. 
In contrast, we prove that this recognition problem is \csf{coNP}-complete in general.

\paragraph{2. Naive-recasting and admissible-closed argumentation frameworks.} 
(Section \ref{sec:admissible-closed}).
An argumentation framework is \emph{naive-recasting} if its preferred extensions are 
the 
naive sets of an other argumentation framework.
This class of argumentation framework has been introduced by Dunne \textit{et al.} \cite{dunne2015characteristics}, and studied in a series of works \cite 
{baumann2014compact,dunne2016investigating, puhrer2020realizability}. 
In particular, the authors in \cite{dunne2015characteristics} show that deciding 
whether a given argumentation framework is naive-recasting is \csf{coNP}-hard.
In our contribution, we introduce \emph{admissible-closed} argumentation frameworks.
An argumentation framework belongs to this class if its admissible sets are 
closed 
under intersection.
Using implicational systems \cite{elaroussi:hal-03184819, wild2017joy}, we 
show that admissible-closed argumentation frameworks are naive-recasting.
Eventually, we prove that recognizing admissible-closed argumentation frameworks is a 
\csf{coNP}-complete problem.

\paragraph{3. Irreducible self-defending sets.} (Section \ref{sec:irreducible}).
Using the lattice structure of self-defending sets \cite{dung1995acceptability, gratzer2011lattice} we introduce irreducible self-defending sets.
A self-defending set is irreducible if it cannot be obtained as the union of other 
self-defending sets.
We construct a bijection between the preferred extensions of an argumentation 
framework 
and the naive sets of another framework on its irreducible self-defending sets.
We deduce that when an argumentation framework meets some lattice properties, it is 
possible to enumerate its preferred extensions with polynomial delay and polynomial 
space.

\section{Background} \label{sec:preliminaries}

All the objects considered in this paper are finite. Let $\A$ be a set.
We denote by $2^{\A}$ the powerset of $\A$.
Let $S \subseteq \A$.
For convenience, and mostly in examples and figures, we may write $S$ as the 
concatenation of its elements.
That is, if $S = \{x_1, \dots, x_n\}$, we may write it $x_1 \dots x_n$.
The \emph{complement} $\bar{S}$ of $S$ in $\A$ is given by $\bar{S} = \A \setminus S$.
Let $\F \subseteq 2^{\A}$ be a set system over $\A$.
The complementary set of $\F$ (with respect to $\A$) is the set system  $\bar{\F}$ 
defined  by $\bar{\F} = \{\bar{S} \mid S \in \F\}$.

\paragraph{Argumentation frameworks.}
We follow \cite{dung1995acceptability}.
An \emph{argumentation framework} is
a pair $\AF = \langle \A, \R \rangle$ where $\A$ is a set of arguments, and $\R$ 
(attacks) is a binary relation on $\A$, i.e. $\R \subseteq  \A\times \A$.
A pair $(x, y$) in $\R$ depicts an attack of the argument $x$ against the argument $y$.
We can represent an argumentation framework $\AF$ by a directed graph $G = (\A, 
\R) $, known as the \emph{attack graph} of $\AF$. 
Given a subset $S$ of arguments, we denote by $S^+$ the set of arguments attacked by 
$S$, i.e. $S^+ = \{x \in \A \mid \exists y \in S, (y,x) \in \R\}$. 
Likewise, the set of arguments	that attack at least one argument of $S$ is denoted by 
$S^-$, that is $S^- = \{ x \in \A \mid \exists y\in S, (x,y) \in \R \}$.
In case $S$ is a singleton $\{y\}$, we write $y^-$ as a shortcut for $\{y\}^-$.
Finally, we put $\Gamma(S) = S^+ \cup S^-$.

%All extensions in argumentation frameworks are based on two important concepts: being \emph{acceptable} or \emph{conflict-free}. Let $S \subseteq \A$.
Given a set of arguments S of $\A$, an argument $x \in 
\A$ is \emph{acceptable} with respect to $S$ if for each $y \in 
\A$, $(y, x) \in \R$ implies that $(z, y) \in \R$ for some $z$ in $S$.
We say that $S$ is \emph{conflict-free} (an \emph{independent set} in graph theory's 
terminology) if $S \cap S^+ = \emptyset$.
We denote by $\CF(\AF)$ the family of all conflict-free sets of $\AF$.
We call $\S(\AF)$ the set of all self-attacking arguments, that is $\S(\AF) = \{x \in \A 
\mid 
(x, x) \in \R \}$.
The inclusion-wise maximal conflict-free subsets of $\A$  are called \emph{naive 
sets} \cite{bondarenko1997abstract} ({\em maximal independent set} in graph theoretic 
terms). 
The family of all naive sets of $\AF$ is denoted by $\NAIV(\AF)$.
A set $S\subseteq \A$ is  \emph{self-defending} if  $S^-\subseteq S^+$.
The family of all self-defending sets of $\AF$ is denoted by 
$\SD(\AF)$. 
Using conflict-free and acceptable sets, Dung \cite{dung1995acceptability} 
defines several extensions. 
A conflict-free set $S\subseteq A$ is \emph{admissible} if 
$S^-\subseteq S^+$.
The family of all admissible sets of $\AF$ is  denoted by
$\ADM(\AF)$.
The inclusion-wise maximal admissible subsets of $\A$ are called \emph{preferred} 
extensions, and the family of all preferred extensions of $\AF$ is 
denoted by $\PREF(\AF)$. 
We refer the reader to Baroni \textit{et al.}~\cite{baroni2011introduction} 
for more discussion on semantics.

\begin{example}[Running example] \label{ex:ex-defs-1}
Let $\A = \{1, 2, 3, 4, 5\}$ be a set of arguments, and let $F = \langle \A, \R \rangle$ 
be the argumentation framework represented by the attack graph of Figure 
\ref{fig:ex-defs-1}.
We will use this argumentation framework as a running example all along the paper.
It has no self-attacking arguments.
The set of arguments $345$ is self-defending but it is not conflict-free.
In fact, $45$ is self-defending, and a preferred extension of $\AF$.
\begin{figure}[ht!]
\centering 
\includegraphics[scale=1.1, page=1]{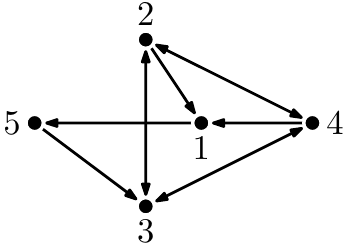}%
\caption{The attack graph $G = (\A, \R)$ of $F$.}
\label{fig:ex-defs-1}
\end{figure}
\end{example}

\paragraph{Closure systems and implicational systems.}
We use terminology from \cite{gratzer2011lattice, wild2017joy}.
Let $\F$ be a set system over $\A$.
The pair $\langle \F, \subseteq \rangle$ denotes the family $\F$ ordered by 
set-inclusion.
The system $\F$ is a \emph{closure system} over $\A$ if $\A \in \F$ and $F_1 \cap F_2 \in 
\F$ for every $F_1, F_2 \in \F$, that is, if $\F$ is closed under intersection.
The sets in $\F$ are called \emph{closed sets}.
It is known that if $\F$ is a closure system,
$\langle \F, \subseteq \rangle$ is a \emph{lattice} (and so does $\langle \bar{\F}, 
\subseteq \rangle$).

%When ordered by inclusion, a closure system (or its complement) forms a 
%\emph{lattice}.
An \emph{implication} over $\A$ is an expression of the form $X \rightarrow y$ where 
$X \subseteq \A$ and $y \in \A$.
In $X \rightarrow y$, $X$ is the \emph{premise} and $y$ the \emph{conclusion} of the 
implication.
An \emph{implicational system} $\IS$ over $\A$ is a set of implications over $\A$.
%The size $\vert \IS \vert$ of $\IS$ is its number of implications.
%For more details on implicational systems, see~\cite{wild2017joy}.
Let $\IS = \{X_1\rightarrow x_1,\dots,X_n\rightarrow x_n\}$ be an implicational system 
on a set $\A$ and let $S \subseteq \A$. 
The \emph{$\IS$-closure} of $S$, denoted  $S^\IS$, is the inclusion-wise minimal set 
containing $S$ and satisfying:
\[ X_j \subseteq S^\Sigma \implies x_j \in S^\Sigma \quad \text{ for every } 1 \leq j 
\leq n. \]
The family  $\F_\IS = \{S^\IS \mid S \subseteq \A\}$ is a closure system. 
%The sets in $\F_\IS$ are the closed sets of $\IS$.

\begin{example} \label{ex:ex-defs-2}
Let $\A = \{1, 2, 3, 4, 5\}$ and consider the implicational system $\IS = \{1 \imp 3, 24 
\imp 5, 34 \imp 1, 23 \imp 1\}$.
For example, $124$ is not $\IS$-closed as it contains the premise of $1 \imp 3$, but not 
its conclusion.
\end{example}

It has been shown in \cite{elaroussi:hal-03184819} that the
%In \cite{elaroussi:hal-03184819}, the authors show that the 
self-defending sets of an argumentation framework coincide with the closed sets of an implicational system. 

\begin{theorem}\cite{elaroussi:hal-03184819}
	\label{thm:thmb}
	Let $\AF = \langle \A,\R \rangle$ be an argumentation framework, and let  	
	$\IS=\{y^-\rightarrow z \mid (y,z)\in \R \text{ and } (z,y)\notin \R\}$ be its 
	associated implicational system. Then $\bar{\F_{\IS}} = \SD(\AF)$.	
\end{theorem}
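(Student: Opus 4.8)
The plan is to prove the set equality $\bar{\F_\IS} = \SD(\AF)$ through an elementwise characterization: for every subset $T \subseteq \A$, I would show that $T$ is self-defending if and only if its complement $\bar T$ is $\IS$-closed. Since $\F_\IS$ is exactly the family of $\IS$-closed sets and $\bar{\F_\IS} = \{\bar{S} \mid S \in \F_\IS\}$, this single equivalence yields the theorem at once.

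First I would unfold the $\IS$-closedness of $C := \bar T$ into a condition phrased purely in terms of $T$. By definition $C$ is $\IS$-closed precisely when, for every implication $y^- \to z$ of $\IS$ (that is, for every attack $(y,z) \in \R$ with $(z,y) \notin \R$), the inclusion $y^- \subseteq C$ forces $z \in C$. The two membership translations I would record explicitly are $y^- \subseteq \bar T \iff y \notin T^+$ (no attacker of $y$ lies in $T$) and $z \in \bar T \iff z \notin T$. Taking the contrapositive, $\IS$-closedness of $\bar T$ becomes the statement: for every non-mutual attack $(y,z)$, if $z \in T$ then $y \in T^+$.

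Next I would rewrite the self-defending condition $T^- \subseteq T^+$ in the same vocabulary, where it reads: for every attack $(x,w) \in \R$ with $w \in T$, the attacker $x$ belongs to $T^+$. The key observation, and the crux of the argument, is that this constraint is vacuous on mutual attacks: if $(x,w) \in \R$ and $(w,x) \in \R$ with $w \in T$, then $w$ already attacks $x$, so $x \in T^+$ holds automatically (self-loops fall under this case as well). Hence the self-defending condition is equivalent to its restriction to non-mutual attacks, which is exactly the condition extracted from $\IS$ in the previous step; this is precisely why $\IS$ may legitimately discard the mutual attacks via the side condition $(z,y) \notin \R$.

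With this alignment in place the equivalence follows in both directions almost immediately: the forward implication is direct because every $\IS$-implication arises from an attack that self-defense already controls, while the backward implication uses the mutual-attack observation to recover the constraints that $\IS$ deliberately omits. The only point requiring care is keeping the roles of $S^-$, $S^+$ and $y^-$ straight under complementation, so I would state the two membership translations before combining them; beyond this bookkeeping I expect no genuine obstacle.
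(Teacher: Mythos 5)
Your proof is correct. Note that the paper itself does not prove Theorem \ref{thm:thmb}: it is quoted from prior work \cite{elaroussi:hal-03184819}, so there is no in-paper proof to compare against, and your argument stands as a self-contained verification. The two translations you record are exact ($y^- \subseteq \bar{T} \iff y \notin T^+$ and $z \in \bar{T} \iff z \notin T$), turning $\IS$-closedness of $\bar{T}$ into ``for every $(y,z) \in \R$ with $(z,y) \notin \R$, $z \in T$ implies $y \in T^+$''; and your key observation that the self-defense requirement $T^- \subseteq T^+$ is automatically satisfied on mutual attacks and self-loops (exactly the attacks the side condition $(z,y) \notin \R$ discards from $\IS$) correctly shows the two conditions coincide, giving $T \in \SD(\AF) \iff \bar{T} \in \F_{\IS}$ and hence $\bar{\F_{\IS}} = \SD(\AF)$. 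The only implicit step worth stating explicitly is that $\F_{\IS} = \{S^{\IS} \mid S \subseteq \A\}$ is precisely the family of $\IS$-closed sets (every $S^{\IS}$ is closed, and every closed set equals its own closure); with that standard fact in place, your elementwise equivalence yields the theorem.
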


\begin{example}[Running example]
The implicational system associated to $F$ is $\IS = \{1 \imp 3, 24 \imp 5, 34 \imp 1, 23 
\imp 1\}$ (the implicational system of Example \ref{ex:ex-defs-2}).
For example, the implication $34 \imp 1$ is derived from the fact that $2$ attacks $1$ 
and $2^- = \{3, 4\}$.
In Figure \ref{fig:ex-defs-3}, we represent the lattice $\langle \SD(F), \subseteq 
\rangle$.
Due to Theorem \ref{thm:thmb}, we have $\SD(F) = \bar{\F_{\IS}}$ where $\F_{\IS}$ is the 
closure system of $\IS$ in Example \ref{ex:ex-defs-2}.
\begin{figure}[ht!]
\centering 
\includegraphics[scale=1, page=2]{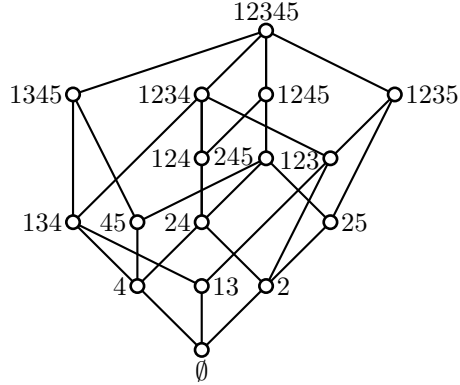}%
\caption{The lattice $\langle \SD(F), \subseteq \rangle$ of self-defending sets of $F$}
\label{fig:ex-defs-3}
\end{figure}

\end{example}

\paragraph{Enumeration complexity.} We use the definitions of 
\cite{johnson1988generating}.
Let $\ctt{A}$ be an algorithm with input $x$ and output a set of solutions $R(x)$.
We denote by $\card{R(x)}$ the number of solutions in $R(x)$.
We assume that each solution in $R(x)$ has size $\poly(\card{x})$.
The algorithm $\ctt{A}$ is running in \emph{output-polynomial} time if its execution time 
is bounded by $\poly(\card{x} + \card{R(x)})$.
If the delay between two solutions output and after the last one is $\poly(\card{x})$, 
$\ctt{A}$ has \emph{polynomial-delay}.

\section{Naive-bijective argumentation frameworks}
\label{sec:naive-bijective }

In this section, we show that checking whether the family of preferred extensions of a 
given 
argumentation framework is equal to the family of its naive sets is a 
\csf{coNP}-complete problem. 
In contrast, we give a polynomial time algorithm to analyze the bounded in-degree 
argumentation frameworks.

\begin{definition} \label{def:naive-bijective }
Let $\AF=\langle \A,\R \rangle$ be an argumentation framework.
We say that $\AF$ is \emph{naive-bijective } if $\NAIV(\AF) = \PREF(\AF)$.
\end{definition}
In order to illustrate the above definition, we provide the following examples.

\begin{example}[Running Example]
The argumentation framework $\AF$ of Example \ref{ex:ex-defs-1} is naive-bijective.
Indeed, we have $\NAIV(\AF) = \PREF(\AF) = \{13, 45, 25\}$.
\end{example}

\begin{example}
	\label{ex: non-bijective}
Let $\A = \{1, 2, 3\}$ and $\R = \{(1, 2), (2, 3)\}$. 
The set $\{2\}$ is naive, but not self-defending. 
Thus, the argumentation framework $\langle \A, \R \rangle$ is not naive-bijective.
\end{example}

Since naive sets can be enumerated with polynomial delay and space \cite{johnson1988generating}, 
naive-bijective argumentation frameworks are particularly interesting for the enumeration 
of preferred extensions.

The following lemma provides a simple case where an argumentation framework fails to be 
naive-bijective.
\begin{lemma} \label{lem:lemSym1}
Let $\AF=\langle \A,\R\rangle$ be an argumentation framework and let $\IS$ be its 
associated implicational system, obtained with Theorem \ref{thm:thmb}. 
If $\emptyset^{\IS} \nsubseteq \S(\AF)$, then $\AF$ is not naive-bijective.
\end{lemma}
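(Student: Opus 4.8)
The plan is to read the hypothesis $\emptyset^{\IS} \nsubseteq \S(\AF)$, through Theorem~\ref{thm:thmb}, as the existence of a non-self-attacking argument that cannot belong to any self-defending set, and hence to no preferred extension, yet does belong to some naive set. Exhibiting such a naive set that is not a preferred extension immediately witnesses $\NAIV(\AF) \neq \PREF(\AF)$, which is exactly the failure of naive-bijectivity.

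First I would record the key structural fact: no self-defending set meets $\emptyset^{\IS}$. Indeed, $\emptyset^{\IS}$ is the inclusion-wise minimal $\IS$-closed set, so by monotonicity of the closure operator every closed set $T \in \F_{\IS}$ satisfies $\emptyset^{\IS} \subseteq T^{\IS} = T$ (using $\emptyset \subseteq T$). By Theorem~\ref{thm:thmb}, $\SD(\AF) = \bar{\F_{\IS}}$, so any $S \in \SD(\AF)$ is the complement $S = \bar{T}$ of some closed set $T \supseteq \emptyset^{\IS}$; thus $S \subseteq \A \setminus \emptyset^{\IS}$, i.e. $S \cap \emptyset^{\IS} = \emptyset$. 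Since admissible sets are self-defending and preferred extensions are admissible, it follows that $P \cap \emptyset^{\IS} = \emptyset$ for every $P \in \PREF(\AF)$: no preferred extension contains any argument of $\emptyset^{\IS}$.

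Then I would build the witness. By hypothesis there is an argument $a \in \emptyset^{\IS}$ with $a \notin \S(\AF)$, hence $(a,a) \notin \R$ and $\{a\}$ is conflict-free. Since $\A$ is finite, $\{a\}$ extends greedily to an inclusion-wise maximal conflict-free set $N \in \NAIV(\AF)$ with $a \in N$. Because $a \in \emptyset^{\IS}$, the previous paragraph forbids $N$ from coinciding with any preferred extension, so $N \in \NAIV(\AF) \setminus \PREF(\AF)$, and therefore $\AF$ is not naive-bijective.

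The only delicate point is orienting the complementation bijection of Theorem~\ref{thm:thmb} correctly: the \emph{minimum} closed set $\emptyset^{\IS}$ corresponds to the \emph{maximum} self-defending set $\A \setminus \emptyset^{\IS}$, which is what confines every self-defending set to $\A \setminus \emptyset^{\IS}$. The self-attacking hypothesis then plays a precise role—it guarantees the offending argument of $\emptyset^{\IS}$ is conflict-free as a singleton and so survives into a naive set; without it (if $\emptyset^{\IS} \subseteq \S(\AF)$) every argument of $\emptyset^{\IS}$ is self-attacking and absent from all conflict-free sets, so no such witness need exist.
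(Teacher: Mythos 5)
Your proof is correct and follows essentially the same approach as the paper's: the paper's (one-sentence) proof rests on exactly the two facts you establish, namely that an argument $x \in \emptyset^{\IS} \setminus \S(\AF)$ lies in some conflict-free (hence some naive) set but in no admissible set. Your write-up simply makes explicit the closure-theoretic step (every self-defending set avoids $\emptyset^{\IS}$ via Theorem~\ref{thm:thmb}) and the greedy extension to a naive witness, both of which the paper leaves implicit.
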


\begin{proof}
The result follows from the fact that an argument $x \in \emptyset^{\IS}\setminus 
\S(\AF)$ belongs to at least one conflict-free set in $\AF$ and not to any admissible 
set in $\AF$.
\end{proof}
An argument $x$ in $\emptyset^{\IS}$ cannot appear in a self-defending set.
In particular, it is not contained in any admissible set.
Thus, adding the self-attack $(x, x)$ to $\R$ does not change $\ADM(\AF)$.
Therefore, an argumentation framework where $\emptyset^\IS \nsubseteq \S(F)$ can 
be 
transformed in polynomial time to an argumentation framework satisfying $\emptyset^{\IS} 
\subseteq \S(F)$ without changing the admissible and preferred extensions.
This is done by adding a self-attack to each argument $x$ in 
$\emptyset^\IS$.
\begin{example}
Consider the argumentation framework of Example \ref{ex: non-bijective}. 
$\AF$ can be transformed into a naive-bijective argumentation framework. 
Indeed, $2\in \emptyset^\IS$ as $1^- = \emptyset$, and $\{2\}$ is the unique naive set 
which is not a preferred extension of $\AF$.
Thus, adding the loop $(2, 2)$ to $\R$ does not change $\PREF(\AF)$, while it removes 
$\{2\}$ from $\NAIV(\AF)$.
As a result, $\AF$ becomes naive-bijective. 
\end{example}

The next theorem characterizes naive-bijective  argumentation frameworks.

\begin{theorem}\label{thm:ThmSym2}
Let $\AF=\langle \A,\R \rangle$ be an argumentation framework and let $\IS$ be its 
associated implicational system, obtained with Theorem \ref{thm:thmb}. 
Suppose that  $\emptyset^\IS\subseteq \S(\AF)$.
%Then $\AF$ is naive-bijective  if and only if for each $x,y\in \A$ such that 
%$(y,x)\in 
%\R$ and $(x,y)\notin\R$, there does not exist a subset $S\subseteq \Gamma(y^-)$ 
%satisfying 
%$S\cup \{x\} \in \CF(\AF)$ and $S\cup \{z\} \notin \CF(\AF)$ for every $z \in y^-$.
Then, $\AF$ is not naive-bijective  if and only if for some $x, y \in \A$ such 
that $(y, x) \in \R$ and $(x, y) \notin \R$, there exists a conflict-free set $S 
\subseteq \Gamma(y^-) \setminus y^-$ satisfying:
\begin{itemize}
\item $S \cup \{x\} \in \CF(\AF)$,
\item $S \cap \Gamma(z) \neq \emptyset$ for each $z \in y^-$.
\end{itemize}

\end{theorem}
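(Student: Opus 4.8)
The plan is to reduce the statement to a condition on naive sets and then translate it through the implicational system of Theorem~\ref{thm:thmb}. First I would prove the pivotal equivalence: $\AF$ is naive-bijective if and only if every naive set is self-defending (equivalently, admissible). Indeed, if some naive $N$ is not self-defending then $N$ is not admissible, so $N\in\NAIV\setminus\PREF$ and $\AF$ is not naive-bijective; conversely, if every naive set is self-defending, then each naive set is a maximal conflict-free admissible set, hence preferred, giving $\NAIV\subseteq\PREF$, while any preferred extension extends to a naive set which is then admissible, forcing equality by maximality. By Theorem~\ref{thm:thmb}, a naive $N$ is self-defending exactly when $\bar N$ is $\IS$-closed. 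Hence $\AF$ fails to be naive-bijective precisely when some naive $N$ violates an implication of $\IS$, that is, there is $(y,x)\in\R$ with $(x,y)\notin\R$, $y^-\cap N=\emptyset$ and $x\in N$. This implicational reformulation is what I would pivot on.

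For the direction from the combinatorial condition to non-bijectivity, I would start from $S$ and $x$ and extend the conflict-free set $S\cup\{x\}$ to a naive set $N$. The crux is that $N$ avoids $y^-$: for each $z\in y^-$ the hypothesis $S\cap\Gamma(z)\neq\emptyset$ yields $s\in S\subseteq N$ in conflict with $z$, and $s\neq z$ since $S\cap y^-=\emptyset$; hence $z\notin N$, so $N\cap y^-=\emptyset$. As $y$ attacks $x\in N$ and $N$ does not counterattack $y$, the naive set $N$ is not self-defending, and by the reformulation $\AF$ is not naive-bijective.

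For the converse I would take a naive $N$ that is not self-defending together with an undefended attacker $y$: some $x\in N$ with $(y,x)\in\R$ and $N\cap y^-=\emptyset$, whence $(x,y)\notin\R$ automatically. I would set $S=N\cap(\Gamma(y^-)\setminus y^-)$. The inclusion $S\subseteq\Gamma(y^-)\setminus y^-$ and the conflict-freeness of $S$ and of $S\cup\{x\}\subseteq N$ are immediate. For the blocking condition, fix $z\in y^-$; since $z\notin N$ and $N$ is maximal conflict-free, either $z$ conflicts with some $s\in N$ or $z$ attacks itself. In the first case $s\in N\cap\Gamma(z)$ and $s\notin y^-$ (because $N\cap y^-=\emptyset$), so $s\in S$ and $S\cap\Gamma(z)\neq\emptyset$.

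The main obstacle I anticipate is the remaining case: a self-attacking $z\in y^-$ that conflicts with no element of $N$. No naive set contains a self-attacking argument, so such a $z$ is irrelevant to the extension argument of the first direction, yet the stated condition quantifies over all of $y^-$. Verifying $S\cap\Gamma(z)\neq\emptyset$ for these $z$ — or, more likely, reducing to an undefended pair $(N,y)$ for which they do not arise — is exactly where I expect the hypothesis $\emptyset^\IS\subseteq\S(\AF)$ to be indispensable, since that hypothesis is precisely what forbids non-self-attacking arguments lying outside every self-defending set. I would therefore concentrate the effort on choosing the witness $(N,y)$ carefully, perhaps extremally (e.g.\ minimising the number of self-attacking attackers of $y$ isolated from $N$) and rerouting the self-attack of $z$ through an element of $\Gamma(z)$; establishing that this reduction is always possible under $\emptyset^\IS\subseteq\S(\AF)$ is the delicate step, while the conflict-free extension bookkeeping in the other steps is routine.
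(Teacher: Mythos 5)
Your route is the same as the paper's: reduce naive-bijectivity to ``every naive set is self-defending'', get the backward implication by extending $S\cup\{x\}$ to a naive set avoiding $y^-$, and get the forward implication from a naive, non-self-defending $N$ by taking $S=N\cap(\Gamma(y^-)\setminus y^-)$. All of that is correct and matches the paper step for step. The gap is the case you yourself flag and leave open: a self-attacking $z\in y^-$ with $\Gamma(z)\cap N=\emptyset$. Your hope that $\emptyset^\IS\subseteq\S(\AF)$ disposes of this case cannot work: that hypothesis constrains only non-self-attacking arguments (it says they lie in some self-defending set), and the problematic $z$ \emph{is} self-attacking, so the hypothesis is silent about it. As written, your proof of the only-if direction is therefore incomplete.

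Moreover, the step you could not fill cannot be filled: the only-if direction is false as stated. Take $\A=\{x,y,z\}$ and $\R=\{(y,x),(y,y),(z,y),(z,z)\}$. The associated system is $\IS=\{yz\imp x,\ z\imp y\}$, so $\emptyset^{\IS}=\emptyset\subseteq\S(\AF)=\{y,z\}$ and the hypothesis holds; also $\NAIV(\AF)=\{\{x\}\}$ while $\PREF(\AF)=\{\emptyset\}$ (the set $\{x\}$ is attacked by $y$ and attacks nothing), so $\AF$ is not naive-bijective. Yet no witness exists: for the pair $(y,x)$ we have $y^-=\{y,z\}$ and $\Gamma(y^-)\setminus y^-=\{x\}$, and every $S\subseteq\{x\}$ misses $\Gamma(z)=\{y,z\}$; for the pair $(z,y)$ the required set $S\cup\{y\}$ can never be conflict-free because $y$ attacks itself. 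Note that the paper's own proof silently makes exactly the inference you refused to make: from $S'\cup\{z\}\notin\CF(\AF)$ it concludes that $S'$ contains some $t\in\Gamma(z)$, which fails when $z$ is self-attacking and isolated from $S'$. (Both the statement and your argument become correct, with no use of the hypothesis at all, if the blocking condition $S\cap\Gamma(z)\neq\emptyset$ is demanded only for $z\in y^-\setminus\S(\AF)$: self-attacking attackers can never enter a naive set anyway, which is all the first direction needs.) So your instinct was sound; the case you isolated is not a missing lemma but a counterexample to the stated equivalence and an error in the paper's proof of it.
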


\begin{proof}
We begin with the if part.
Let $x,y\in \A$ such that $(y,x)\in \R$ and $(x,y)\notin \R$ and 
suppose that there exists $S\subseteq \Gamma(y^-) \setminus y^-$ such that 
$S\cup \{x\} \in \CF(\AF)$ and for all $z \in y^-$, $S \cap \Gamma(z) \neq 
\emptyset$. 
Then, there exists a naive set $S'$ such that $S \cup \{x\} \subseteq S'$ and 
$S' \cap y^- = \emptyset$, i.e. such that $y$ attacks $x$ and $y$ is not attacked by 
$S'$.
Thus, $S'$ is not self-defending and hence it is not preferred either.

We move to the only if part.
Assume that $\AF$ is not naive-bijective.
Then, there exists a naive set $S'$ which is not self-defending.
More precisely, there exists $x \in S'$ and $y \notin S'$ such that $(y, x) \in \R$ and 
$S' \cap y^- = \emptyset$.
In particular, we have $(x, y) \notin \R$.
Now, let $S = S' \cap \Gamma(y^-)$.
Since $S' \cap y^- = \emptyset$, we have that $S \subseteq \Gamma(y^-) \setminus y^-$.
We show that $S$ satisfies the two conditions of the theorem:
\begin{itemize} \itemsep0em 
\item $S \cup \{x\} \in \CF(\AF)$.
It follows from $S \cup \{x\} \subseteq S'$ and the fact that $S'$ is conflict-free.

\item $S \cap \Gamma(z) \neq \emptyset$ for each $z \in y^-$.
Since $S'$ is a naive set and $S' \cap y^- = \emptyset$, for every $z \in y^-$ we have 
that $S' \cup \{z\} \notin \CF(F)$.
Thus, $S'$ contains some $t \in \Gamma(z)$.
As $\Gamma(z) \subseteq \Gamma(y^-)$, we deduce that $t \in S' \cap \Gamma(y^-) = S$ 
and hence that $S \cap \Gamma(z) \neq \emptyset$. 
\end{itemize}
In other words, there exists $x, y \in \A$ with $(y, x) \in \R$, $(x, y) \notin \R$ and 
such that some $S \subseteq \Gamma(y^-) \setminus y^-$ satisfies $S \cup \{x\} \in 
\CF(\AF)$ and $S \cap \Gamma(z) \neq \emptyset$ for each $z \in y^-$.
This concludes the proof of the theorem.

\end{proof}

Now we consider the problem of recognizing naive-bijective argumentation 
frameworks.
Formally, the problem reads as follows:

\begin{decproblem}
	\problemtitle{Naive-bijective  argumentation framework recognition (\csmc{NAF})}
	\probleminput{An argumentation framework $\AF = \langle \A, \R \rangle$.}
	\problemquestion{\textit{Is $\AF$ naive-bijective ?}} 
\end{decproblem}

On the basis of Theorem \ref{thm:ThmSym2}, we deduce the following strategy to solve 
\csmc{NAF}.
For each $x,y\in \A$ such that $(y,x)\in \R$ and $(x,y)\notin \R$, check 
whether there exists a 
conflict-free set $S \subseteq \Gamma(y^-) \setminus y^-$ such that $S\cup \{x\} 
$ is a conflict-free set of $\AF$ and for all $z\in y^-$, $S \cap \Gamma(z) \neq 
\emptyset$. %is not a conflict-free set of $\AF$. 
Return \textit{``No''} if such a $S$ exists and \textit{``Yes''} otherwise.
Quite clearly, this algorithm has exponential-time complexity.
Yet, it runs in polynomial time if the in-degree of $\AF$ is constant.

\begin{proposition} \label{prop:AlgSym}
	Let $\AF=\langle \A,\R \rangle$ be an argumentation framework.
	If the in-degree of $\AF$ is bounded by a constant, there exists a polynomial time 
	algorithm (in the size of $\AF$) to check whether $\AF$ is naive-bijective .
\end{proposition}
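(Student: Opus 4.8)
The plan is to turn the characterization of Theorem \ref{thm:ThmSym2} into a search procedure and to argue that, under a bounded in-degree, the search space attached to each pair of arguments stays polynomially small.

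First I would handle the hypothesis $\emptyset^\IS \subseteq \S(\AF)$ required by Theorem \ref{thm:ThmSym2}. Building the implicational system $\IS$ of Theorem \ref{thm:thmb} and computing the closure $\emptyset^\IS$ by forward chaining takes polynomial time. If $\emptyset^\IS \nsubseteq \S(\AF)$, then $\AF$ is not naive-bijective by Lemma \ref{lem:lemSym1}, and we answer \textit{``No''}. Otherwise the hypothesis of Theorem \ref{thm:ThmSym2} holds, and we run the strategy described after the theorem: iterate over all pairs $x, y \in \A$ with $(y, x) \in \R$ and $(x, y) \notin \R$ --- there are at most $\card{\R} \le \card{\A}^2$ of them --- and for each such pair decide whether there is a conflict-free set $S \subseteq \Gamma(y^-) \setminus y^-$ with $S \cup \{x\} \in \CF(\AF)$ and $S \cap \Gamma(z) \neq \emptyset$ for every $z \in y^-$. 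We answer \textit{``No''} as soon as some pair admits such an $S$, and \textit{``Yes''} if no pair does. Correctness is immediate from Theorem \ref{thm:ThmSym2}.

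The only step that is not obviously polynomial is this inner decision, since $\Gamma(y^-) \setminus y^-$ may be large (the out-degree is unbounded), so it has exponentially many subsets. Here I would use the following size reduction, which is the crux of the argument. Suppose some valid $S$ exists for the pair $(x, y)$. For each $z \in y^-$ the requirement $S \cap \Gamma(z) \neq \emptyset$ lets us pick a representative $s_z \in S \cap \Gamma(z)$; set $S' = \{ s_z \mid z \in y^- \}$. Then $S' \subseteq S \subseteq \Gamma(y^-) \setminus y^-$, so $S'$ and $S' \cup \{x\}$ are again conflict-free, while $s_z$ witnesses $S' \cap \Gamma(z) \neq \emptyset$ for each $z$; hence $S'$ is also valid and $\card{S'} \le \card{y^-}$. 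Consequently it suffices to search for a valid set among the subsets of $\Gamma(y^-) \setminus y^-$ of cardinality at most $\card{y^-}$.

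Finally I would invoke the bounded in-degree. If every argument has in-degree at most a constant $k$, then $\card{y^-} \le k$, so the inner search enumerates only the subsets of $\Gamma(y^-) \setminus y^-$ of size at most $k$, of which there are $O(\card{\A}^{k})$. Each candidate $S$ is tested for the three conditions (conflict-freeness of $S$ and of $S \cup \{x\}$, and $S \cap \Gamma(z) \neq \emptyset$ for the at most $k$ elements $z \in y^-$) in polynomial time. The whole procedure thus runs in time $O(\card{\A}^{k+2} \cdot \poly(\card{\AF}))$, which is polynomial for constant $k$. The main obstacle is precisely this size-reduction step: without bounding the cardinality of a minimal witness $S$ by $\card{y^-}$, the unbounded out-degree would leave the inner search exponential.
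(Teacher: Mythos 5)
Your proposal is correct and follows essentially the same route as the paper: both reduce the search for a witness $S$ to subsets of $\Gamma(y^-) \setminus y^-$ of size at most $k = \card{y^-}$ (your representative-picking argument is the same idea as the paper's restriction to inclusion-wise minimal elements of the family of hitting sets), then enumerate these $O(\card{\A}^k)$ candidates per pair $(x,y)$. The only difference is cosmetic and in your favor: you explicitly dispatch the hypothesis $\emptyset^\IS \subseteq \S(\AF)$ of Theorem \ref{thm:ThmSym2} via Lemma \ref{lem:lemSym1}, a preprocessing step the paper's proof leaves implicit.
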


\begin{proof}
%By assumption, the sizes of $y^-$ and $\Gamma(y^-)$ are bounded by a constant.
%Thus, for each $x \in \A$, the condition of Theorem \ref{thm:ThmSym2} can be verified 
%by testing a constant number of subsets of $\A$.
%Since $\A$ is given in the input, the whole procedure runs in polynomial time as 
%required.
Let $x, y \in \A$ such that $(y, x) \in \R$ and $(x, y) \notin \R$.
By assumption, $y^- = \{z_1, \dots, z_k\}$ for some constant $k$.
For convenience, let $\cc{T} = \{S \subseteq \Gamma(y^-) \setminus y^- \mid S \cap 
\Gamma(z_i) \neq \emptyset \text{ for each } z_i \in y^-\}$.
According to Theorem \ref{thm:ThmSym2}, $\AF$ is not naive-bijective  if there 
exists 
some conflict-free set $S \in \cc{T}$ such that $S \cup \{x\} \in \CF(\AF)$.
Since every subset of a conflict-free set is conflict-free, it is sufficient to check the 
existence of such a $S$ in $\min_{\subseteq}(\cc{T})$, the collection of inclusion-wise 
minimal elements of $\cc{T}$.
Let $S \in \min_{\subseteq}(\cc{T})$.
Since $\card{y^-} = k$ and by definition of $\cc{T}$, $S$ has size at most $k$.
Hence, $\min_{\subseteq}(\cc{T})$ can be computed in polynomial time by checking all the 
subsets of $\Gamma(y^-) \setminus y^-$ of size at most $k$.
Then, for each $S \in \min_{\subseteq}(\cc{T})$, testing whether $S \cup \{x\} 
\in \CF(\AF)$ also requires polynomial time in the size of $\AF$.

Applying this algorithm for each $x, y \in \A$ such that $(y, 
x) \in \R$ and $(x, y) \notin \R$, we can verify the condition of Theorem 
\ref{thm:ThmSym2} in polynomial time in the size of $\AF$, concluding the proof.
\end{proof}

We settle the general hardness of \csmc{NAF} in the subsequent theorem.

\begin{theorem} \label{thm:naf-npc}
	The problem \csmc{NAF} is \csf{coNP}-complete.
\end{theorem}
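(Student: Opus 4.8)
The plan is to prove membership in \csf{coNP} first, and then \csf{coNP}-hardness by a reduction from \csmc{SAT} to the complement of \csmc{NAF}.

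For membership I would argue about the complement, i.e. deciding whether $\AF$ is \emph{not} naive-bijective. I would first record the clean reformulation that already underlies the proof of Theorem~\ref{thm:ThmSym2}: $\AF$ is naive-bijective if and only if every naive set is self-defending. The forward implication is immediate, since $\NAIV(\AF)=\PREF(\AF)$ forces every naive set to be preferred, hence admissible, hence self-defending. For the converse, if every naive set is self-defending then each naive set, being conflict-free, is admissible and, being maximal conflict-free, is maximal admissible, so $\NAIV(\AF)\subseteq\PREF(\AF)$; and any preferred extension that were not naive could be extended to a naive proper superset which would again be admissible, contradicting maximality, whence $\PREF(\AF)\subseteq\NAIV(\AF)$. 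Given this, a polynomial certificate for a NO-instance is a naive set $N$ together with arguments $x\in N$ and $y\in N^-$ witnessing $y\notin N^+$: checking that $N$ is conflict-free and maximal, that $(y,x)\in\R$, and that $N\cap y^-=\emptyset$ all take polynomial time. Hence the complement lies in \csf{NP} and \csmc{NAF} is in \csf{coNP}.

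For hardness I would reduce \csmc{SAT} to the complement of \csmc{NAF}, the guiding idea being to build a framework whose attacks are \emph{all symmetric except a single directed edge}, so that a satisfying assignment corresponds exactly to a naive set that fails to be self-defending. Given a CNF formula $\phi$ with variables $v_1,\dots,v_n$ and clauses $c_1,\dots,c_m$, I add for each variable $v_i$ two arguments $p_i,n_i$ with a mutual attack $p_i\leftrightarrow n_i$; for each clause $c_j$ a ``defender'' $d_j$ with a mutual attack to each literal-argument occurring in $c_j$; and two fresh arguments $x,y$, where $y$ is mutually adjacent to every $d_j$, the \emph{unique asymmetric attack} is $(y,x)\in\R$ (with $(x,y)\notin\R$), and $x$ has no other incident attack. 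Then $y^-=\{d_1,\dots,d_m\}$, and since the single asymmetric attack has nonempty premise $y^-$ one checks $\emptyset^{\IS}=\emptyset\subseteq\S(\AF)$, so Theorem~\ref{thm:ThmSym2} (equivalently, the reformulation above) applies.

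The heart of the argument, and the step I expect to be the main obstacle, is \emph{soundness}: ruling out spurious non-self-defending naive sets unrelated to $\phi$. This is exactly what the single-asymmetric-edge structure controls. Indeed, if a naive set $N$ is not self-defending it has an attacker $a\in N^-\setminus N^+$; were the offending attack symmetric, the attacked member of $N$ would counterattack $a$, so the attack must be the asymmetric one, forcing $a=y$, $x\in N$ and $N\cap y^-=\emptyset$ (note $y\notin N$ as $y$ attacks $x\in N$). Thus $\AF$ is not naive-bijective precisely when some naive set $N$ contains $x$ and avoids every $d_j$. For such a maximal $N$, maximality forces a neighbor of each $d_j$ into $N$, and since $y\notin N$ this neighbor is a literal-argument of $c_j$; the gadgets $p_i\leftrightarrow n_i$ make the chosen literals a consistent total assignment satisfying every clause. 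Conversely, from a satisfying assignment one forms $N$ by taking $x$ together with one true literal per variable and checking it is maximal conflict-free: $x$ blocks $y$, each true literal blocks its $d_j$'s, and each omitted literal is blocked by its variable's partner. The reduction is plainly polynomial, giving \csf{coNP}-hardness and, with the membership argument, \csf{coNP}-completeness of \csmc{NAF}.
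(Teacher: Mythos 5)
Your proof is correct and takes essentially the same approach as the paper: your hardness reduction builds exactly the paper's gadget (complementary literal arguments and clause arguments linked by symmetric attacks, a guard $y$ symmetrically attacked by the clause arguments, and the single asymmetric attack $(y,x)$), so that satisfying assignments correspond precisely to naive sets that fail to be self-defending. Your explicit \csf{coNP}-membership argument, via the reformulation that $\AF$ is naive-bijective if and only if every naive set is self-defending, is a sound addition that the paper leaves implicit.
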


\begin{proof}
The proof is a reduction from 3-SAT.
Let $C = \bigwedge_{i=1}^m C_i$ be a CNF formula, where
each clause $C_i$ is a disjunction of three literals from $\{x_1,...,x_n, \neg 
x_1,...,\neg x_n\}$.
Let $\AF$ be the argumentation framework $\langle \A, \R \rangle$ where (see also 
Figure \ref{fig:attack-graph-1}):
\begin{align*}
\A = & \{x, y\} \cup\{C_i \mid 1\leq i\leq m\} \\
     & \cup \{x_j, \neg x_j \mid 1\leq j\leq n\}; \\ 
\R = & \{(y,x)\} \cup \{(C_i, y),(y, C_i) \mid 1\leq i\leq m\} \\
     & \cup \{(x_j, C_i),(C_i, x_j) \mid x_j \text{ occurs in } C_i\} \\
     & \cup \{(\neg x_j,C_i),(C_i, \neg x_j) \mid \neg x_j \text{ occurs in } C_i\} \\
     & \cup \{(x_j,\neg x_j), (\neg x_j, x_j) \mid 1\leq j\leq n\}.
\end{align*}
The construction of $\AF$ is done in polynomial time in the size of $C$.
We show that $\AF$ has a non-admissible naive set if and only if $C$ is 
satisfiable.
Let $S$ be a naive set of $\AF$.
We have two cases.
First, $y \in S$.
Then, $S$ is a conflict-free extension of the sub-argumentation framework $\AF_{\vert 
x}= 
\langle \A \setminus \{x\}, \R\setminus \{(y,x)\} \rangle$.
Since $\AF_{\vert x}$ is symmetric and $x^+ = \emptyset$, each subset of $\A 
\setminus 
\{x\}$ is self-defending in $\AF$ (see \cite{coste2005symmetric}).
In particular, every conflict-free set of $\AF_{\vert x}$ is admissible in $\AF$. 
Therefore, $S$ must be admissible in $\AF$.
The second case is, $y \notin S$.
Then, $x \in S$ and $S$ is not admissible in $\AF$ if and only if $S \cap \{C_1, 
\dots, C_m\} = \emptyset$.
By construction of $\AF$, it follows that $S$ is not admissible if and only if for 
each 
$C_i$, $1 \leq i \leq m$, there exists some $1 \leq j \leq n$ such that $\ell_j \in 
C_i$ 
and $\ell_j \in S$ with $\ell_j \in \{x_j, \lnot x_j\}$.
As $(x_j, \lnot x_j) \in \R$ for each $1 \leq j \leq n$ and $S$ is conflict-free, 
$\{x_j, 
\lnot x_j\} \subseteq S$ cannot hold.
We deduce that $S$ is not admissible if and only if $C$ is satisfiable, concluding 
the 
proof.
\end{proof}
\begin{figure}[h!]
	\centering 
	\includegraphics[scale=1, page=1]{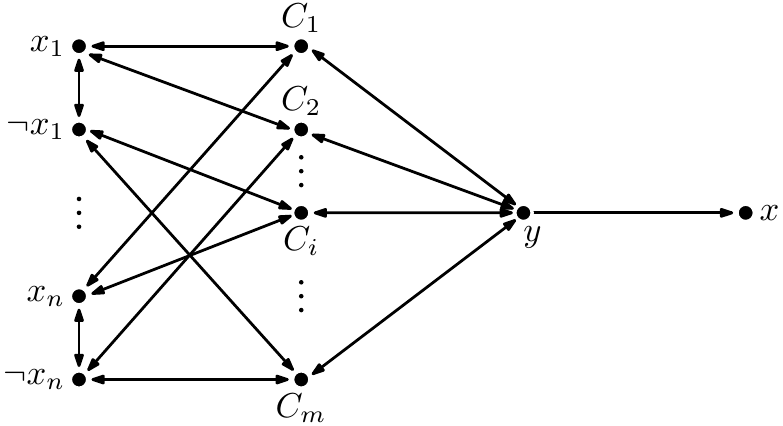}%
	\caption{Attack graph of $\AF$.}
	\label{fig:attack-graph-1}
	\label{FigSym}
\end{figure}
\section{Naive-recasting and admissible-closed argumentation frameworks}
\label{sec:admissible-closed}

In this section, we study the class of argumentation frameworks whose preferred 
extensions are the naive sets of another argumentation framework.
This class of argumentation frameworks is called naive-recasting and defined formally as 
follows.

\begin{definition} \label{def:naive-recasting}
	Let $\AF=\langle \A,\R \rangle$ be an argumentation framework. We say that $F$ is 
	\emph{naive-recasting} if there exists an argumentation framework $\AF'=\langle 
	\A',\R' 
	\rangle$ such that $\PREF(\AF)=\NAIV(\AF')$.
\end{definition} 

Dunne \textit{et al.} \cite{dunne2015characteristics} show that it is \csf{coNP}-hard to 
recognize 
naive-recasting argumentation frameworks. 
This problem is defined as follows:

\begin{decproblem}
\problemtitle{Naive-recasting argumentation framework recognition (\csmc{NRAF})}
\probleminput{An argumentation framework $\AF = \langle \A, \R \rangle$.}
\problemquestion{\textit{Is $\AF$ naive-recasting?}} 
\end{decproblem} 

%\remove{
%\begin{theorem} \cite{dunne2015characteristics}
%	The problem \csmc{NRAF} is \csf{coNP}-hard.
%\end{theorem}
%}

Let $\AF=\langle \A,\R\rangle$ be an argumentation framework.
We prove that $\AF$ is naive-recasting in the restricted case where $\ADM(\AF)$ is 
closed under intersection. 
First, we formally define this class of argumentation frameworks.

\begin{definition} \label{def:admissible-closed}
Let $\AF=\langle \A,\R\rangle$ be an argumentation framework.
We say that $\AF$ is \emph{admissible-closed} if its admissible sets are closed 
under 
intersection.
\end{definition}
Let us now provide some examples to illustrate the above definition.

\begin{example}[Running Example]
The argumentation framework $\AF$ of Example \ref{ex:ex-defs-1} is not admissible-closed.
Indeed, $45$ and $25$ are admissible but $45 \cap 25 = 5$ is not self-defending. 
\end{example}

\begin{example}
Let $\A = \{1, 2, 3, 4\}$ and $\R = \{(1, 3), (3, 1), (1, 2), (3, 2), (3, 4)\}$.
Let $\AF = \langle \A, \R \rangle$.
We have $\ADM(\AF) = \{\emptyset, 1, 3, 14\}$.
Since $1 \cap 3 = 14 \cap 3 = \emptyset$ and $1 \cap 14 = 1$, we derive that $\AF$ is 
admissible-closed.
\end{example}

In the following lemma, we show that each admissible-closed argumentation framework has a 
family  of self-defending sets closed under union and intersection and includes all its 
admissible sets.

\begin{lemma} \label{lem:ThmDistributive}
	Let $\AF=\langle \A,\R\rangle$ be an admissible-closed argumentation framework. Then, 
	there exists a family of self-defending sets $SD'(\AF)\subseteq SD(\AF)$ closed under 
	union and intersection such that $SD'(\AF)\cap CF(\AF)$= $ADM(\AF)$.
\end{lemma}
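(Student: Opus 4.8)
The plan is to produce $SD'(\AF)$ explicitly as the family of all unions of admissible sets, namely
$SD'(\AF) = \{\bigcup \mathcal{G} \mid \mathcal{G} \subseteq ADM(\AF)\}$, and then verify the four required properties. Before doing the work, I would record a reduction that trivialises the condition $SD'(\AF) \cap CF(\AF) = ADM(\AF)$. Since admissible sets are exactly the conflict-free self-defending sets, we have $ADM(\AF) = SD(\AF) \cap CF(\AF)$; hence for \emph{any} subfamily $SD'(\AF) \subseteq SD(\AF)$ one automatically gets $SD'(\AF) \cap CF(\AF) \subseteq SD(\AF) \cap CF(\AF) = ADM(\AF)$. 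So the desired equality collapses to the single inclusion $ADM(\AF) \subseteq SD'(\AF)$, which holds at once for my candidate because every admissible set $A$ is the union of the one-element family $\{A\}$, and in particular $A \in CF(\AF)$ gives $A \in SD'(\AF) \cap CF(\AF)$.

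Next I would check that $SD'(\AF) \subseteq SD(\AF)$ and that $SD'(\AF)$ is closed under union. Both rest on the elementary fact that self-defending sets are closed under union: if $S_1^- \subseteq S_1^+$ and $S_2^- \subseteq S_2^+$, then $(S_1 \cup S_2)^- = S_1^- \cup S_2^- \subseteq S_1^+ \cup S_2^+ = (S_1 \cup S_2)^+$ (equivalently, this is immediate from Theorem \ref{thm:thmb}, as $\bar{\F_{\IS}}$ is the complement of an intersection-closed family and hence union-closed). Since each admissible set is self-defending, any union of admissible sets is self-defending, giving $SD'(\AF) \subseteq SD(\AF)$; and a union of unions of admissible sets is again a union of admissible sets, so closure under union is immediate.

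The only substantive point, and the single place where the admissible-closed hypothesis is genuinely consumed, is closure under intersection. Here I would take $U_1 = \bigcup_i A_i$ and $U_2 = \bigcup_j B_j$ with all $A_i, B_j \in ADM(\AF)$ and apply the distributive law $U_1 \cap U_2 = \bigcup_{i,j} (A_i \cap B_j)$. Because $\AF$ is admissible-closed, each $A_i \cap B_j$ is again admissible, so $U_1 \cap U_2$ is once more a union of admissible sets and therefore lies in $SD'(\AF)$. I expect this to be the main obstacle only in the sense that it is the one step that truly uses the assumption: the distributive identity itself is routine, but without admissible-closedness the intersections $A_i \cap B_j$ would merely be self-defending rather than conflict-free, and the argument would break. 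Assembling the four observations yields a family with $ADM(\AF) \subseteq SD'(\AF) \subseteq SD(\AF)$, closed under union and intersection, and satisfying $SD'(\AF) \cap CF(\AF) = ADM(\AF)$, which is exactly the claim.
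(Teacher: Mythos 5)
Your proposal is correct and is essentially the paper's own proof: the same candidate family (unions of admissible sets), the same use of union-closure of self-defending sets, the same reduction of the equality $SD'(\AF)\cap \CF(\AF)=\ADM(\AF)$ to the inclusions $\ADM(\AF)\subseteq SD'(\AF)\subseteq \SD(\AF)$, and the same distributive-law argument consuming the admissible-closed hypothesis for intersection-closure. The only (harmless) difference is that you prove union-closure of self-defending sets directly rather than citing Dung.
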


\begin{proof}
	Let $\SD'(\AF)$ be the set system obtained from $\ADM(\AF)$ after closing under 
	union, 
	that is $\SD'(\AF) = \{ \bigcup \I \mid \I \subseteq \ADM(\AF) \}$.
	Since self-defending sets are closed under union \cite{dung1995acceptability}
	and 
	$\ADM(\AF) \subseteq \SD(\AF)$, $\SD'(\AF) \subseteq \SD(\AF)$ readily holds.
	Then, the fact that $\SD'(\AF)\cap \CF(\AF) = \ADM(\AF)$ follows from $\ADM(\AF) 
	\subseteq 
	\SD'(\AF) \subseteq \SD(\AF)$ and $\SD(\AF)\cap \CF(\AF) = \ADM(\AF)$.
	
	It remains to prove that $\SD'(\AF)$ is closed under intersection.
	Let $S_1, S_2 \in \SD'(\AF)$. 
	We show that $S_1 \cap S_2 \in \SD'(\AF)$.
	By construction of $\SD'(\AF)$, $S_1 = \bigcup_{i=1}^k A_i$ and $S_2 = 
	\bigcup_{j=1}^{\ell} B_j$ with $A_i, B_j \in \ADM(\AF)$ for every $1 \leq i \leq k$ 
	and 
	every $1 \leq j \leq \ell$.
	Therefore, we have:
	\[ 
	S_1 \cap S_2 = \left(\bigcup_{i=1}^k A_i\right) \cap \left(\bigcup_{j=1}^{\ell} 
	B_j\right) = \bigcup_{i=1}^k \bigcup_{j=1}^{\ell} (A_i\cap B_j)
	\]
	%.
	However, $\ADM(\AF)$ is closed under intersection by assumption.
	Consequently, $A_i \cap B_j \in \ADM(\AF)$ for every $1 \leq i \leq k$ and every $1 
	\leq 
	j \leq \ell$.
	We deduce that $S_1 \cap S_2 \in \SD'(\AF)$ as required.
\end{proof}
We are now ready to state the main result of this section.
\begin{theorem}	\label{tmh:ThmDsitrEx}
	Every admissible-closed argumentation framework is naive-recasting.
\end{theorem}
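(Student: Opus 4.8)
The plan is to reduce the theorem to a single combinatorial property of $\PREF(\AF)$ and then to establish that property using the intersection-closure hypothesis. The starting observation is that realizability as naive sets depends \emph{only} on a conflict graph: for any $\AF'=\langle \A',\R'\rangle$, membership in $\CF(\AF')$ is decided pairwise (a set is conflict-free iff none of its elements is self-attacking and no pair of its elements is linked by $\R'$), so $\NAIV(\AF')$ is exactly the family of maximal independent sets of the undirected graph underlying $\R'$. Hence $\AF$ is naive-recasting iff $\PREF(\AF)$ is an antichain that is \emph{conformal}: whenever every pair $\{u,v\}\subseteq S$ lies in a common preferred extension, the whole of $S$ lies in a single preferred extension.

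Accordingly I would take $\A'=\A$ and define $\AF'=\langle \A,\R'\rangle$ by its conflicts: for $p\neq q$ put $(p,q),(q,p)\in\R'$ exactly when no preferred extension of $\AF$ contains both $p$ and $q$, and put a self-loop $(p,p)$ exactly when $p$ lies in no preferred extension. Then $\CF(\AF')$ is precisely the family of pairwise-compatible sets. Granting conformality, both inclusions are routine: each $P\in\PREF(\AF)$ is pairwise-compatible and, being maximal admissible, cannot be enlarged inside $\CF(\AF')$, so $P\in\NAIV(\AF')$; conversely any $M\in\NAIV(\AF')$ is pairwise-compatible, so by conformality $M\subseteq P$ for some preferred $P$, and maximality of $M$ forces $M=P$.

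The heart of the argument — and the only place where admissible-closure is used — is conformality, where I would exploit intersection-closure to define, for each argument $u$ lying in some admissible set, the \emph{minimal admissible set containing} $u$, namely $\mu(u)=\bigcap\{A\in\ADM(\AF)\mid u\in A\}$. Since $\ADM(\AF)$ is closed under intersection and finite, $\mu(u)$ is admissible, contains $u$, and is contained in every admissible set containing $u$. Now let $S$ be pairwise-compatible and, for each pair $\{u,v\}$, fix a preferred extension $P_{uv}\supseteq\{u,v\}$. By minimality $\mu(u)\cup\mu(v)\subseteq P_{uv}$, so $\mu(u)\cup\mu(v)$ is conflict-free. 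Consequently $A=\bigcup_{u\in S}\mu(u)$ is conflict-free (conflict-freeness is pairwise, and every pair of $A$ sits inside some $\mu(u)\cup\mu(v)$) and it is self-defending as a union of self-defending sets \cite{dung1995acceptability}; thus $A$ is admissible, $S\subseteq A$, and extending $A$ to a maximal admissible set yields a preferred extension containing $S$.

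I expect the conformality step to be the genuine obstacle, because it is \emph{false} without intersection-closure: the operator $\mu$ need not exist when an argument belongs to two admissible sets whose intersection fails to be admissible, and in that case one can construct frameworks with a pairwise-compatible $S$ lying in no preferred extension. The role of the admissible-closed hypothesis — equivalently, via Lemma \ref{lem:ThmDistributive}, the distributive-lattice structure of $\SD'(\AF)$ and its associated implicational system, which is the route giving the explicit construction of $\AF'$ — is exactly to furnish these minimal admissible sets, and thereby to make pairwise compatibility lift to global compatibility. A secondary point to treat carefully is the bookkeeping of the self-attacking arguments of $\AF'$ (those in no preferred extension): they are excluded from every conflict-free set and so do not disturb $\NAIV(\AF')$.
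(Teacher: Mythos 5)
Your proposal is correct, but it follows a genuinely different route from the paper's proof. The paper first builds (Lemma~\ref{lem:ThmDistributive}) a family $\SD'(\AF)\subseteq\SD(\AF)$ closed under union and intersection with $\SD'(\AF)\cap\CF(\AF)=\ADM(\AF)$, represents it by a singleton-premise implicational system $\IS_d$, and constructs $\AF'$ by \emph{extending} $\R$: self-attacks on arguments outside the largest set of $\SD'(\AF)$, and symmetric attacks $(x,y),(y,x)$ for every implication $x\imp z\in\IS_d$ and every $y\in\Gamma(z)$; the theorem then follows from the two inclusions $\NAIV(\AF')\subseteq\ADM(\AF)\subseteq\CF(\AF')$. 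You instead discard $\R$ altogether, take $\AF'$ to be the compatibility graph of $\PREF(\AF)$, and reduce the theorem to a realizability-style criterion in the spirit of Dunne \textit{et al.}: since naive sets are exactly maximal independent sets of the underlying conflict graph, it suffices to show $\PREF(\AF)$ is conformal. Your conformality argument via the minimal admissible sets $\mu(u)=\bigcap\{A\in\ADM(\AF)\mid u\in A\}$ is sound and is precisely where intersection-closure enters: $\mu(u)\cup\mu(v)\subseteq P_{uv}$ makes $\bigcup_{u\in S}\mu(u)$ conflict-free (conflict-freeness being a pairwise property), unions of self-defending sets are self-defending, so $S$ extends to a preferred extension. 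Note that your $\mu$ is essentially the operator underlying the paper's $\IS_d$: an implication $x\imp z$ with singleton premise encodes $z\in\mu(x)$, so the two proofs rest on the same mathematical core but package it differently. What each buys: your decomposition isolates a clean, reusable criterion (conformality of $\PREF(\AF)$) and makes the exact role of the admissible-closed hypothesis transparent; the paper's construction keeps $\R\subseteq\R'$, so $\AF'$ is syntactically tied to $\AF$, and stays within the implicational and lattice machinery that the paper reuses in Section~\ref{sec:irreducible} and that motivates its open problem on computing $\AF'$ efficiently. Two minor remarks: you state the conformality criterion as an equivalence but only prove (and only need) the ``if'' direction; and your $\AF'$ is defined from $\PREF(\AF)$ itself, which is harmless since the theorem is purely existential---the paper's construction is equally non-effective, as it presupposes $\IS_d$.
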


\begin{proof}
	Consider an admissible-closed argumentation framework $\AF=\langle \A, \R \rangle$.
	According to Lemma \ref{lem:ThmDistributive}, there is a collection  of 
	self-defending 
	sets $\SD'(\AF) \subseteq \SD(\AF)$ which completely 
	includes $\ADM(\AF)$. 
	Since $\SD'(\AF)$ is closed under union and intersection, it is possible to 
	represent the closure system $\langle \SD'(\AF), \subseteq \rangle$ by an
	implicational system $\IS_d$ with singleton premises \cite{wild2017joy}.
	Let $S^*$ be the inclusion-wise maximal set of $\SD'(\AF)$. 
	Consider now the argumentation framework $\AF'=\langle \A,\R'\rangle$ constructed 
	from 
	$\AF$, $\IS_d$ and $S^*$, by extending $\R$ to $\R'$ with the following rules:
	\begin{itemize}
		\item [ ] \textbf{Rule 1:} if $x\in \A\setminus S^*$, then add $(x,x)$ to $\R'$.
		\item [ ] \textbf{Rule 2:} if $x\rightarrow z \in \IS_d$ and $y\in \Gamma(z)$, 
		then 
		add $(x,y)$ and $(y,x)$ to $\R'$. 
	\end{itemize}
	To show that $F$ is naive-recasting, it is sufficient to prove 
	$\NAIV(\AF') \subseteq \ADM(\AF) \subseteq \CF(\AF')$.
	
	We begin with the first inclusion.
	Let $S \in \NAIV(\AF')$.
	Since $\R \subseteq \R'$, $S \in \CF(\AF)$ is clear.
	By \textbf{Rule 1}, $S \subseteq S^*$.
	We show that $S$ is closed for $\IS_d$ and hence that $S$ is self-defending in $\AF$, 
	in 
	virtue of Lemma \ref{lem:ThmDistributive}.
	Let $x \rightarrow z$ be an implication of $\IS_d$ such that $x \in S$.
	Since $S$ is conflict-free in $\AF'$, \textbf{Rule 2} implies that $S \cap \Gamma(z) 
	= 
	\emptyset$.
	Thus, $S \cup \{z\}$ is also conflict-free in $\AF'$.
	As $S$ is an inclusion-wise maximal conflict-free set of $\AF'$, we deduce that $z 
	\in S$.
	Therefore, $S$ is a closed set of $\IS_d$ so that $S \in \SD(\AF)$ holds.
	Consequently, we have $S \in \ADM(\AF)$ as required.
	
	We move to the second inclusion.
	Let $S \in \ADM(\AF)$.
	We show that $S \in \CF(\AF')$.
	By assumption on $\AF$, $S \subseteq S^*$ and $S$ is a closed set  of $\IS_d$.
	Suppose for contradiction that $S$ is not conflict-free in $\AF'$.
	Since $S$ is conflict-free in $\AF$ and $S \subseteq S^*$, it must be that $S$ 
	contains a 
	pair $(x, y)$ of $\R'$ which has been added by \textbf{Rule 2}.
	Consequently, there exists an implication $x\rightarrow z$ in $\IS_d$ such that $x 
	\in S$ 
	and $y \in \Gamma(z)$.
	However, $S$ is a closed set  of $\IS_d$ by assumption on $\AF$ and by construction 
	of $\IS_d$.
	Therefore, $x \in S$ implies $z \in S$ so that $\{y, z\} \subseteq S$ also holds.
	As $y \in \Gamma(z)$ in $\AF$, this contradicts $S \in \CF(\AF)$.
	We deduce that $S \in \CF(\AF')$, which concludes the proof.
\end{proof}

In the following result, we show that it is \csf{coNP}-complete to recognize 
admissible-closed argumentation frameworks.

\begin{decproblem}
	\problemtitle{Admissible-closed argumentation framework recognition (\csmc{ACAF})}
	\probleminput{An argumentation framework $\AF = \langle \A, \R \rangle$.}
	\problemquestion{\textit{Is $\AF$ admissible-closed?}} 
\end{decproblem} 

\begin{theorem} \label{thm:acaf-npc}
	The problem \csmc{ACAF} is \csf{coNP}-complete.
\end{theorem}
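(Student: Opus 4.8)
The plan is to establish membership in \csf{coNP} and then \csf{coNP}-hardness.

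For membership, observe that $\AF$ fails to be admissible-closed precisely when there are two admissible sets $S_1, S_2 \in \ADM(\AF)$ whose intersection is not admissible. As the intersection of two conflict-free sets is again conflict-free, the only obstruction is self-defense, so such a pair satisfies $S_1 \cap S_2 \notin \SD(\AF)$. The pair $(S_1, S_2)$ is therefore a certificate of size polynomial in $\card{\A}$ for the complementary problem, and it can be checked in polynomial time: computing $S^+$, $S^-$ and testing $S \cap S^+ = \emptyset$ together with $S^- \subseteq S^+$ is polynomial for any $S \subseteq \A$. Hence the complement of \csmc{ACAF} lies in \csf{NP}, which gives \csmc{ACAF} $\in \csf{coNP}$.

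For hardness, I would reduce from 3-SAT, in the spirit of the proof of Theorem \ref{thm:naf-npc}. Given a 3-CNF formula $C = \bigwedge_{i=1}^{m} C_i$ over variables $x_1, \dots, x_n$, I would build in polynomial time a framework $\AF = \langle \A, \R \rangle$ for which $\AF$ is \emph{not} admissible-closed if and only if $C$ is satisfiable; since 3-SAT is \csf{NP}-complete, this shows that the complement of \csmc{ACAF} is \csf{NP}-hard, i.e. \csmc{ACAF} is \csf{coNP}-hard. The attack graph would reuse the assignment-and-clause gadget of the previous reduction, namely literal arguments $x_j, \lnot x_j$ with mutual attacks $(x_j, \lnot x_j), (\lnot x_j, x_j)$ so that a conflict-free selection encodes a (partial) truth assignment, together with clause arguments $C_i$ wired to the literals they contain, augmented by a \emph{defense-splitting} gadget. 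The role of the latter is to provide a distinguished argument that is retained by an intersection while both of its possible defenders are dropped, mimicking the failure $45 \cap 25 = 5 \notin \SD(\AF)$ of the running example.

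Concretely, I would add a target argument together with an attacker and two mutually attacking defenders (so that no conflict-free set contains both defenders), arranged so that a satisfying assignment yields two admissible sets $S_1$ and $S_2$ that share the target but use the two distinct defenders; their intersection then keeps the target yet loses its only defenders, so that $S_1 \cap S_2$ is conflict-free but not self-defending. Conversely, any pair of admissible sets violating closure under intersection must arise from this gadget, and reading off the selected literals must yield an assignment satisfying every clause. The main obstacle is exactly this two-directional faithfulness, and I expect the delicate part to be the unsatisfiable direction: I must rule out \emph{spurious} witnesses, that is, show that when $C$ is unsatisfiable every pair of admissible sets has an admissible intersection, despite the combinatorial freedom in the literal and clause gadgets. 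This will require a careful case analysis of where an intersection could lose self-defense, using that symmetric sub-frameworks render their conflict-free sets self-defending (as already exploited in Theorem \ref{thm:naf-npc}) to control the assignment part, and confining the only possible defect to the defense-splitting gadget, whose activation is tied to covering all clauses. Checking that $S_1$ and $S_2$ are genuinely admissible in the satisfiable direction should then be routine by comparison.
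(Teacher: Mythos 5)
You take essentially the same route as the paper: \csf{coNP}-membership via the certificate consisting of two admissible sets whose intersection fails self-defense (conflict-freeness being automatic), and hardness via a 3-SAT reduction whose ``defense-splitting'' gadget is exactly the paper's construction --- a target $x$ attacked by $y$, two mutually attacking defenders $z_1, z_2$ both attacking $y$, and clause arguments attacking $z_1$ only, so that an admissible set containing $\{x, z_1\}$ exists iff $C$ is satisfiable, while $\{x, z_2\}$ is always admissible, giving the violating intersection $\{x\}$. Your membership half is complete (the paper does not even write it out). The one step you defer --- ruling out spurious witnesses when $C$ is unsatisfiable --- is where your sketch stops short of a proof, but it closes in a few lines once the wiring is fixed asymmetrically, using exactly the tools you name: if no admissible set contains both $x$ and $z_1$, then no admissible set contains $z_1$ at all (any admissible $S \ni z_1$ extends by $x$, since $z_1$ defends $x$ against its unique attacker $y$), none contains $y$ (defending $y$ against $z_2$ would require $z_1$, which attacks $y$), and every admissible set containing $x$ contains $z_2$; consequently each admissible set has the form $T \cup A$ with $A \in \{\emptyset, \{z_2\}, \{x, z_2\}\}$ and $T$ a conflict-free subset of $\B = \A \setminus \{x, y, z_1, z_2\}$, this form is preserved under intersection, each such $A$ is self-defending, and every conflict-free $T \subseteq \B$ is admissible because the sub-framework on $\B$ is symmetric and receives no attacks from the gadget. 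In short, your plan is sound and coincides with the paper's proof; what it leaves open is only this final verification, whose ease hinges on the specific asymmetry (one clause-gated defender, one free defender) that your description gestures at but never pins down.
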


\begin{proof}
The proof is a reduction from 3-SAT. 
Let $C = \bigwedge_{i=1}^m C_i$ be a CNF formula, where
each clause $C_i$ is a disjunction of three literals from $\{x_1,...,x_n, \neg 
x_1,...,\neg x_n\}$.
Let $\AF$ be the argumentation framework $\langle \A, \R \rangle$ where (see also 
Figure 
\ref{fig:attack-graph-2}):
\begin{align*}
\A = & \{x, y, z_1, z_2\} \cup \{C_i \mid 1\leq i\leq m\} \\
     & \cup \{x_j, \neg x_j \mid 1 \leq j\leq n\} \\
\R = & \{(x_j,C_i),(C_i,x_j) \mid x_j \text{ occurs in } C_i\} \\
     & \cup \{(\neg x_j,C_i),(C_i, \neg x_j) \mid \neg x_j \text{ occurs in } C_i\} \\ 
     & \cup \{(z_1,y), (z_2,y), (z_1,z_2), (z_2,z_1), (y,x)\} \\
     & \cup \{(C_i,z_1) \mid 1\leq i\leq m\} \\
     & \cup \{(x_j,\neg x_j), (\neg x_j, x_j) \mid 1\leq j\leq n\}
\end{align*}
The construction of $\AF$ can be done in polynomial time in the size of $C$.
Using the same reasoning as in Theorem \ref{thm:naf-npc}, we can show that $\AF$ 
has an admissible set $S$ with $\{x, z_1\} \subseteq S$ if and only if $C$ is 
satisfiable.
It remains to show that $\AF$ is admissible-closed if and only if such a $S$ does not 
exist.

We prove the only if part using contrapositive.
Assume that there exists an admissible set $S$ such that $\{z_1, x\} \subseteq 
S$.
Since $(z_1, z_2) \in \R$ and $S$ is admissible, $z_2 \notin S$.
As $\{x, z_2\}$ is also admissible, we have that $S \cap \{x, z_2\}$ is not 
self-defending as $x \in S \cap \{x, z_2\}$ and $y^- \cap S = \emptyset$.
Hence, $\AF$ is not admissible-closed.

We move to the if part.
Suppose that $S$ does not exists. 
By assumption, an admissible set which contains $x$ also contains $z_2$.
Moreover, $y^- = \{z_1, z_2\}$ and $(z_1, z_2) \in \R$ so that no admissible set 
in 
$\AF$ contains $y$.
Thus, the family of admissible sets of $\AF$ is not closed under intersection if 
and 
only 
if there exists two admissible sets $S_1, S_2 \subseteq \B= \A\setminus 
\{x,y,z_1,z_2\}$ 
such 
that their intersection $S_1 \cap S_2$ is not an admissible set of $\AF$.
However, there is no attack from $\{x, y, z_1, z_2\}$ to $\B$ and the 
sub-argumentation 
framework $\AF' = \langle \B, \R\cap (\B\times \B)\rangle$ is symmetric (see, 
\cite{coste2005symmetric}).
Therefore, each conflict-free set of $\AF'$ is admissible in $\AF$.
As the intersection of two conflict-free sets is conflict-free, we deduce that for 
every 
admissible sets $S_1, S_2 \subseteq \B$, $S_1 \cap S_2$ is also admissible.
As a consequence, $\AF$ is admissible-closed, concluding the proof.
\end{proof}

\begin{figure}[h!]
	\centering 
	\includegraphics[scale=1, page=2]{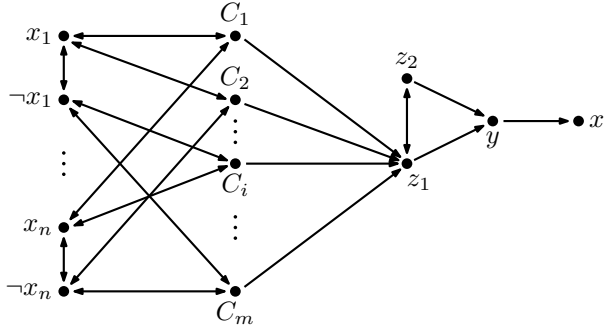}%
	\caption{Attack graph of $\AF$.}
	\label{fig:attack-graph-2}
\end{figure}

To conclude this section, let $\AF$ be an admissible-closed argumentation framework.
According to Theorem \ref{tmh:ThmDsitrEx}, we can build an argumentation framework $\AF'$ 
(on the same set of arguments) such that $\NAIV(\AF') = \PREF(\AF)$.
Thus, we can compute $\PREF(\AF)$ with polynomial delay from $\AF'$ using the algorithm 
of Johnson \textit{et al.} \cite{johnson1988generating}.
However, whether we can find $\AF'$ in polynomial time in the size of $\AF$ is open.

\paragraph{Open problem.} Let $\AF$ be an admissible-closed argumentation framework and 
let $\AF'$ be an argumentation framework such that $\NAIV(\AF') = \PREF(\AF)$.
Is it possible to find $\AF'$ in polynomial time from $\AF$?

\section{Irreducible self-defending sets and preferred extensions}
\label{sec:irreducible}

In this section, we introduce the irreducible self-defending sets of an argumentation 
framework.
They are those self-defending sets that cannot be obtained as the union of other 
self-defending sets.
In lattice theoretic terms \cite{gratzer2011lattice}, they are the 
join-irreducible elements of the lattice of self-defending sets.
Using these irreducible elements, we construct a new argumentation framework whose naive 
sets are in a one-to-one correspondence with the preferred extensions of the input 
framework.
We deduce an algorithm to list the preferred extensions of an argumentation framework 
using its irreducible self-defending sets.
This algorithm has polynomial-delay and polynomial space when the irreducible 
sets are given.
First, we formally define irreducible self-defending sets.

\begin{definition} \label{def:irreducible}
	Let $\AF = \langle \A, \R \rangle$ be an argumentation framework.
	A self-defending set $S \in \SD(\AF)$ with $S \neq \emptyset$ is \emph{irreducible} 
	if 
	for every $S_1, S_2 \in \SD(\AF)$, $S = S_1 \cup S_2$ implies $S = S_1$ or $S = S_2$.
	We denote by $\IRR(\AF)$ the family of irreducible self-defending sets of $\AF$.
\end{definition}

\begin{remark}
	The \emph{initial admissible sets} \cite{xu2018initial} are irreducible 
	self-defending 
	sets.
	A set of arguments is an \emph{initial admissible set} if it is an 
	inclusion-wise 
	minimal 
	non-empty admissible set.
\end{remark}

Let $\AF = \langle \A, \R \rangle$ be an argumentation framework.
The family $\SD(\AF)$ can be generated from $\IRR(\AF)$ by taking the union 
of all possible subsets of $\IRR(\AF)$, that is $\SD(\AF) = \{\bigcup \I \mid \I 
\subseteq \IRR(\AF)\}$, see \cite{gratzer2011lattice}.
Since a self-defending set in $\IRR(\AF)$ cannot be obtained by union, 
$\IRR(\AF)$ is in fact the most compact representation of $\SD(\AF)$ by self-defending 
sets.
For a given $S \in \SD(\AF)$, we put $\IRR(S) = \{S' \in \IRR(\AF) \mid  S' \subseteq 
S\}$.
We have $S = \bigcup \IRR(S)$.

\begin{remark}
	This representation by irreducible elements is common in 
	several fields of computer science where lattices are used (see e.g.
	\cite{doignon2012knowledge, ganter2010two, markowsky1975factorization, wild2017joy}).
\end{remark}

\begin{example}[Running Example]
In the argumentation framework $\AF$ of Example \ref{ex:ex-defs-1} (see also Figure 
\ref{fig:ex-defs-3}), we have $\IRR(\AF) = \{4, 45, 2, 25, 13, 124\}$.
For example, $245$ is a self-defending set which is not irreducible.
It is obtained as the union of $25$ and $24$.
\end{example}

Let $\AF = \langle A, \R \rangle$ be an argumentation framework.
We construct an argumentation framework $\AF_{\IRR} = \langle \IRR(\AF), \R_{\IRR} 
\rangle$ from which $\PREF(\AF)$ can be easily recovered with $\NAIV(\AF_{\IRR})$.
We start with the following lemma.

\begin{lemma} \label{lem:median}
	Let $\AF = \langle \A, \R \rangle$  be an argumentation framework.
	A self-defending set $S$ is not admissible if and only if there exists $S_1, S_2 \in 
	\IRR(S)$ such that $S_1 \cup S_2$ is not an admissible set of $\AF$.
\end{lemma}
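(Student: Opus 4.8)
The plan is to reduce the statement to a question about conflict-freeness alone. Since $S$ is assumed self-defending and admissibility means being both conflict-free and self-defending, $S$ fails to be admissible precisely when it fails to be conflict-free, i.e. when $S \cap S^+ \neq \emptyset$. Moreover, every $S_1 \cup S_2$ with $S_1, S_2 \in \IRR(S)$ is a union of self-defending sets, hence self-defending itself (self-defending sets are closed under union \cite{dung1995acceptability}); so $S_1 \cup S_2$ is non-admissible exactly when it is not conflict-free. Thus the lemma becomes equivalent to the statement: $S$ is not conflict-free if and only if $S_1 \cup S_2$ is not conflict-free for some $S_1, S_2 \in \IRR(S)$.

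For the backward direction, I would suppose that $S_1 \cup S_2$ is not conflict-free for some $S_1, S_2 \in \IRR(S)$. As $S_1, S_2 \subseteq S$, any internal attack witnessing $(S_1 \cup S_2) \cap (S_1 \cup S_2)^+ \neq \emptyset$ is an attack between two elements of $S$; hence $S$ is not conflict-free, and since $S$ is self-defending, it is not admissible.

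For the forward direction, I would use the decomposition $S = \bigcup \IRR(S)$ recalled just before the lemma. If $S$ is not admissible, then it is not conflict-free, so there is an attack $(a, b) \in \R$ with $a, b \in S$. Choosing $S_1, S_2 \in \IRR(S)$ with $a \in S_1$ and $b \in S_2$ (possible since $S$ is covered by its irreducible components), the pair $a, b$ lies in $S_1 \cup S_2$, which is therefore not conflict-free, hence not admissible.

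The argument is short, and the only point requiring care is ensuring that the object $S_1 \cup S_2$, to which the conflict-free characterisation of admissibility is applied, is genuinely self-defending; this is exactly what union-closure of self-defending sets provides, and it is what allows admissibility of $S_1 \cup S_2$ to be treated on the same footing as that of $S$. The essential structural input is the representation $S = \bigcup \IRR(S)$, which guarantees that every element of $S$ — in particular each endpoint of an internal conflict — is captured by some irreducible self-defending set contained in $S$.
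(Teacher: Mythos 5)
Your proof is correct and follows essentially the same route as the paper's: both reduce non-admissibility to non-conflict-freeness (using that $S$ and each $S_1 \cup S_2$ are self-defending, by union-closure), and both handle the forward direction by locating an internal attack $(a,b)$ in $S$ and covering its endpoints via the decomposition $S = \bigcup \IRR(S)$. The only difference is presentational: you spell out the backward direction and the conflict-freeness reduction explicitly, whereas the paper dismisses the ``if'' part as clear.
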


\begin{proof}
	The if part is clear.
	Let $S \in \SD(\AF)$ with $S \notin \ADM(\AF)$.
	By definition, $S$ contains some pair $(x, y)$ such that $(x, y) \in \R$.
	Since $S = \bigcup \IRR(S)$, there exist $S_1, 
	S_2 \in \IRR(S)$ such that $x \in S_1$ and $y \in S_2$ (possibly $S_1 = 
	S_2$).
	Thus, $S_1 \cup S_2$ is a self-defending set which is not admissible, 
	concluding the proof.
\end{proof}

In the next lemma, we use Lemma \ref{lem:median} to devise an argumentation 
framework $\AF_{\IRR}$ on $\IRR(\AF)$ and show the relationship between 
$\NAIV(\AF_{\IRR})$ and $\PREF(\AF)$.

\begin{lemma} \label{lem:bijection}
Let $\AF = \langle \A, \R \rangle$ be an argumentation framework.
Let $\AF_{\IRR} = \langle \IRR(\AF), \R_{\IRR} \rangle$ be a new argumentation 
framework 
where $\R_{\IRR} = \{(S_1, S_2) \mid S_1, S_2 \in \IRR(\AF) \text{ and } S_1 \cup S_2 
\notin \ADM(\AF) \}$.
Then, the following equality holds and it is moreover a bijection between 
$\PREF(\AF)$ 
and $\NAIV(\AF_{\IRR})$:
\[
\PREF(\AF) = \left\{ \bigcup \I \mid \I \in \NAIV(\AF_{\IRR}) \right\}
\]
\end{lemma}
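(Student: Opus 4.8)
The plan is to define two explicit maps and prove they are mutually inverse bijections between $\NAIV(\AF_{\IRR})$ and $\PREF(\AF)$; the displayed equality then follows at once, since its right-hand side is exactly the image of $\NAIV(\AF_{\IRR})$ under the map $\I \mapsto \bigcup \I$. Concretely, I would set $\Phi(\I) = \bigcup \I$ for $\I \subseteq \IRR(\AF)$ and $\Psi(S) = \IRR(S)$ for $S \in \SD(\AF)$. It then remains to verify four things: that $\Phi$ sends naive sets of $\AF_{\IRR}$ to preferred extensions of $\AF$, that $\Psi$ sends preferred extensions to naive sets, and that $\Phi \circ \Psi = \mathrm{id}$ and $\Psi \circ \Phi = \mathrm{id}$.

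The technical heart is a correspondence between conflict-free sets of $\AF_{\IRR}$ and admissible sets of $\AF$, which I would isolate as two implications. First, if $\I \in \CF(\AF_{\IRR})$, then $\bigcup \I \in \ADM(\AF)$: the union of self-defending sets is self-defending, and were $\bigcup \I$ not conflict-free, a conflicting pair of arguments would already lie in $S_1 \cup S_2$ for some $S_1, S_2 \in \I$, forcing $S_1 \cup S_2 \notin \ADM(\AF)$ and hence $(S_1, S_2) \in \R_{\IRR}$ — contradicting that $\I$ is conflict-free in $\AF_{\IRR}$. This localization of a conflict to a single pair is precisely what Lemma \ref{lem:median} together with the pairwise definition of $\R_{\IRR}$ provides. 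Second, if $S \in \ADM(\AF)$, then $\IRR(S) \in \CF(\AF_{\IRR})$: any $S_1, S_2 \in \IRR(S)$ satisfy $S_1 \cup S_2 \subseteq S$, so $S_1 \cup S_2$ is conflict-free and self-defending, i.e. admissible, whence $(S_1, S_2) \notin \R_{\IRR}$.

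Next I would transfer maximality. If $\I$ is naive in $\AF_{\IRR}$ but $\bigcup \I \subsetneq P$ for some $P \in \ADM(\AF)$, then, using $P = \bigcup \IRR(P)$, some $S' \in \IRR(P)$ is not contained in $\bigcup \I$, so $S' \notin \I$; since $\IRR(P)$ is conflict-free by the second implication, $\I \cup \{S'\} \subseteq \IRR(P)$ is conflict-free and strictly larger than $\I$, contradicting naivety — hence $\bigcup \I \in \PREF(\AF)$. Symmetrically, if $S \in \PREF(\AF)$ but $\IRR(S) \subsetneq \mathcal{J}$ with $\mathcal{J} \in \CF(\AF_{\IRR})$, then $\bigcup \mathcal{J} \in \ADM(\AF)$ by the first implication and $\bigcup \mathcal{J} \supsetneq S$ (any $S' \in \mathcal{J} \setminus \IRR(S)$ cannot be a subset of $S$, else it would belong to $\IRR(S)$), contradicting that $S$ is preferred — hence $\IRR(S) \in \NAIV(\AF_{\IRR})$. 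For the inverse identities, $\Phi(\Psi(S)) = \bigcup \IRR(S) = S$ holds because every self-defending set is the union of the irreducibles below it, while $\Psi(\Phi(\I)) = \IRR(\bigcup \I) = \I$ holds because $\I \subseteq \IRR(\bigcup \I)$ is immediate and $\IRR(\bigcup \I)$ is conflict-free with $\I$ a maximal such set.

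The step I expect to be the main obstacle is the identity $\Psi \circ \Phi = \mathrm{id}$, that is, showing that a naive set $\I$ of $\AF_{\IRR}$ already contains every irreducible lying below $\bigcup \I$. This is exactly where the maximality of $\I$ must be combined with the observation that $\bigcup \I$ is admissible, so that $\IRR(\bigcup \I)$ is a conflict-free set of $\AF_{\IRR}$ containing $\I$ and hence equal to it. The conflict-localization in the first implication is the other delicate point, since it is what makes the purely pairwise relation $\R_{\IRR}$ strong enough to detect the non-admissibility of arbitrary unions.
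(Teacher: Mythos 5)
Your proof is correct, and it is built on the same two maps as the paper's proof ($\I \mapsto \bigcup \I$ and $S \mapsto \IRR(S)$) and the same key insight that a conflict in a union of self-defending sets can be localized to a single pair of generating irreducibles, which is the content of Lemma \ref{lem:median}. Where you genuinely diverge is in how maximality and injectivity are established. The paper argues element-wise: to show $\IRR(S)$ is naive for $S \in \PREF(\AF)$ it fixes $S' \in \IRR(\AF) \setminus \IRR(S)$, extracts a conflicting pair $(x,y)$ from the non-admissible set $S \cup S'$, and performs a case analysis on whether $\{x,y\} \subseteq S'$ or the pair straddles $S'$ and $S$ in order to exhibit an explicit attack in $\R_{\IRR}$; the converse direction similarly shows $S \cup S' \notin \ADM(\AF)$ for each $S' \notin \I$ and then concludes preferredness, leaving implicit the step that an admissible strict superset of $\bigcup \I$ would inherit a conflict from such an $S \cup S'$. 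You instead isolate the two implications $\I \in \CF(\AF_{\IRR}) \Rightarrow \bigcup \I \in \ADM(\AF)$ and $S \in \ADM(\AF) \Rightarrow \IRR(S) \in \CF(\AF_{\IRR})$, and transfer maximality at the level of set systems: a strictly larger admissible $P \supsetneq \bigcup \I$ would make $\I \cup \{S'\} \subseteq \IRR(P)$ a strictly larger conflict-free set of $\AF_{\IRR}$, and a strictly larger conflict-free $\mathcal{J} \supsetneq \IRR(S)$ would make $\bigcup \mathcal{J}$ a strictly larger admissible set of $\AF$; injectivity then falls out of the maximality of $\I$ inside the conflict-free set $\IRR(\bigcup \I)$. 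Your decomposition buys a shorter argument with no case analysis on individual arguments, and it makes explicit the maximality step the paper glosses over; the paper's version is more concrete, exhibiting the attacking pairs of $\R_{\IRR}$ directly. One minor point: your conflict-localization needs the pair $S_1, S_2$ to be taken in $\I$ rather than in $\IRR(\bigcup \I)$ as literally stated in Lemma \ref{lem:median}, but your inline argument (each of $x$, $y$ lies in some member of $\I$) proves exactly this strengthening, just as the paper does in its $\supseteq$ part.
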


\begin{proof}
First, we show the equality.
Let $\AF_{\IRR}$ be the argumentation framework defined in the lemma.
We begin with the $\subseteq$ part of the equality.
Let $S \in \PREF(\AF)$ and consider $\IRR(S)$.
We show that $\IRR(S)$ is a naive set of $\AF_{\IRR}$.
Since $S$ is admissible, $\IRR(S)$ is a conflict-free extension of $\AF_{\IRR}$ by 
Lemma \ref{lem:median} and by construction of $\AF_{\IRR}$.
If $\IRR(S) = \IRR(\AF)$, then it is clearly the unique naive set of 
$\AF_{\IRR}$.
Suppose now there exists $S' \in \IRR(\AF)$ such that $S' \notin \IRR(S)$ and 
consider 
the set $S'' = S' \cup \bigcup \IRR(S) = S' \cup S$.
Since the union of two self-defending set is self-defending, $S''$ is a 
self-defending 
set of $\AF$.
Moreover, we have $S \subset S''$ by definition of $\IRR(S)$.
Since $S \in \PREF(\AF)$ and $S \subset S''$, we have $S'' \notin \ADM(\AF)$.
Thus, there exists a pair $(x, y)$ in $\R$ such that $\{x, y\} \subseteq S''$ while 
$\{x, 
y\} \nsubseteq S$.
As $S$ is admissible and $S'' = S \cup S'$ we deduce that either $x \in 
S'$ or $y \in S'$.
If both $x$ and $y$ belong to $S'$, then $\IRR(S) \cup \{S'\}$ is not a 
conflict-free extension of $\AF_{\IRR}$.
Assume now that $x \in S'$ but $y \notin S'$, without loss of generality.
Observe that we must have $y \in S$ as otherwise $\{x, y\} \subseteq S''$ would not 
hold.
Because $S = \bigcup \IRR(S)$ and $y \in S$, we deduce that there exists 
$T \in \IRR(S)$ such that $y \in T$.
Hence, $\{x, y\} \subseteq S' \cup T$ which entails that $(T, S') \in 
\R_{\IRR}$ and $\IRR(S) \cup \{S'\}$ is not a conflict-free extension of $\AF_{\IRR}$.
Consequently, for every $S' \in \IRR(\AF) \setminus \IRR(S)$, we have that $\IRR(S) 
\cup 
\{S'\}$ is not a conflict-free extension of $\AF_{\IRR}$.
Together with the fact that $\IRR(S)$ is conflict-free, we conclude that $\IRR(S)$ is 
a naive set of $\AF_{\IRR}$ as expected.
	
We move to the $\supseteq$ part.
Let $\I \in \NAIV(\AF_{\IRR})$. 
Let $S = \bigcup \I$.
We show that $S \in \PREF(\AF)$.
Since $\IRR(\AF)$ generates $\SD(\AF)$, $S \in \SD(\AF)$ readily holds.
Assume for contradiction that $S \notin \ADM(\AF)$.
Then, there exists $(x, y) \in \R$ such that $\{x, y\} \subseteq S$.
Since $S = \bigcup \I$, there exists $S_1, S_2 \in \I$ such that $\{x, 
y\} \subseteq S_1 \cup S_2$.
In particular, it must be that $(S_1, S_2) \in \R_{\IRR}$ by construction of 
$\AF_{\IRR}$ 
so that $\I$ is not a conflict-free extension of $\AF_{\IRR}$, a contradiction.
Thus, $S \in \ADM(\AF)$ must hold.
Now let $S' \in \IRR(\AF)$ such that $S' \notin \I$.
If such a $S'$ does not exist, the result $S \in \PREF(\AF)$ is clear.
Otherwise, we have two cases.
First, $\{S'\} \notin \CF(\AF_{\IRR})$.
Then $S \cup S'$ is not admissible in $\AF$ by construction of $\AF_{\IRR}$.
Second, $\{S'\} \in \CF(\AF_{\IRR})$.
Since $\I \in \NAIV(\AF_{\IRR})$, we have that $S \cup \{S'\}$ contains some pair of 
$\R_{\IRR}$ and hence that $S \cup S' \notin \ADM(\AF)$ again by construction of 
$\AF_{\IRR}$.
In both cases, we deduce that $S \cup S' \notin \ADM(\AF)$.
As $S \in \ADM(\AF)$, we deduce that $S \in \PREF(\AF)$ as expected.
This concludes the $\supseteq$ part of the proof, and the equality of the lemma holds.

Now we show that this equality defines a bijection between $\PREF(\AF)$ and 
$\NAIV(\AF_{\IRR})$.
As $\IRR(S)$ is uniquely defined for every $S \in \SD(\AF)$, it is sufficient to 
prove 
that for every $\I \in \NAIV(\AF_{\IRR})$, $\I = \IRR(S)$ for some $S \in \PREF(\AF)$.
Let $\I \in \NAIV(\AF)$ and let $S = \bigcup \I$.
By previous discussion $S \in \PREF(\AF)$.
Moreover, $\I \subseteq \IRR(S)$ by definition of $\IRR(S)$.
To prove that $\I = \IRR(S)$, we show that $S' \notin \I$ implies $S' \notin 
\IRR(S)$, for every $S' \in \IRR(\AF)$.
First, observe that if $\I = \IRR(\AF)$, the implication is clear.
Now assume there exists some $S'$ in $\IRR(\AF) \setminus \I$.
We show that $S \cup S' \neq S$.
Because $\I \in \NAIV(\AF)$ and $S = \bigcup \I$, it follows from the construction of 
$\AF_{\IRR}$ that $S \cup S' \notin \ADM(\AF)$.
As $S \in \ADM(\AF)$, we deduce that $S \cup S' \neq S$ must hold. 
Thus, $S' \notin \IRR(S)$ for every $S' \notin \I$.
We obtain $\I = \IRR(S)$, concluding the proof.
\end{proof}

\begin{example}[Running Example]
Recall that $\IRR(\AF) = \{4, 45, 2, 25, 13, 124\}$.
In Figure \ref{fig:ex:construction-2}, we give the argumentation framework 
$\AF_{\IRR}$ 
from Lemma \ref{lem:bijection}.
It is symmetric by construction.
We have $\NAIV(\AF_{\IRR}) = \{\{4, 45\}, \{13\}, \{2, 25\}\}$,
and $\PREF(\AF) = \{45, 13, 25\}$ can be obtained from $\NAIV(\AF_{\IRR})$ by taking the 
union of the irreducible self-defending sets in a given naive set. 
%is obtained from 
%Then $4 \cup 45 = 45$ and $4 \cup 124 \cup 23 \cup 134 = 1234$, corresponding to 
%$\PREF(\AF)$ as proven in Lemma \ref{lem:bijection}.
%
\begin{figure}[h!]
	\centering
	\includegraphics[scale=1, page=3]{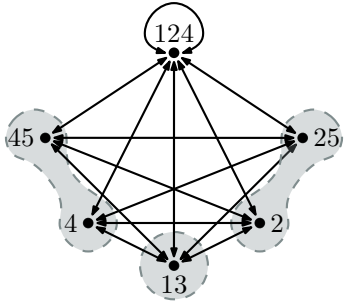}%
	\caption{The argumentation framework $\AF_{\IRR}$ associated to $\AF$.
	Naive sets are highlighted.}
	\label{fig:ex:construction-2}
\end{figure}
\end{example}

We use Lemma \ref{lem:bijection} to find $\PREF(\AF)$.
If $\IRR(\AF)$ is given, $\AF_{\IRR}$ can be computed in 
polynomial time in the size of $\AF$ and $\IRR(\AF)$.
Applying the algorithm of Johnson \textit{et al.} \cite{johnson1988generating}, we derive:

\begin{corollary} \label{cor:poly-delay}
	Let $\AF = \langle \A, \R \rangle$ be an argumentation framework.
	Then, $\PREF(\AF)$ can be computed with polynomial delay and space in the size of 
	$\IRR(\AF)$ and $\AF$.
\end{corollary}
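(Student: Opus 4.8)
The plan is to combine Lemma~\ref{lem:bijection} with the polynomial-delay enumeration of naive sets due to Johnson \textit{et al.}~\cite{johnson1988generating}. The key observation is that Lemma~\ref{lem:bijection} establishes a bijection $\I \mapsto \bigcup \I$ between $\NAIV(\AF_{\IRR})$ and $\PREF(\AF)$, so enumerating the preferred extensions of $\AF$ reduces to enumerating the naive sets of $\AF_{\IRR}$ and applying the union map to each. Thus the task splits into three parts: first, build $\AF_{\IRR}$ from $\AF$ and the given family $\IRR(\AF)$; second, enumerate $\NAIV(\AF_{\IRR})$ with polynomial delay and space; third, transform each naive set into a preferred extension on the fly, without disrupting the delay guarantee.

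First I would argue that $\AF_{\IRR}$ can be constructed in polynomial time in the combined size of $\AF$ and $\IRR(\AF)$, as already noted just before the statement. The argument set of $\AF_{\IRR}$ is exactly $\IRR(\AF)$, and for each ordered pair $(S_1, S_2)$ of irreducible self-defending sets one tests whether $S_1 \cup S_2 \in \ADM(\AF)$; since $S_1 \cup S_2 \subseteq \A$, this amounts to a conflict-free check in $\AF$, which is polynomial in $\card{\A}$ and $\card{\R}$. There are $\card{\IRR(\AF)}^2$ such pairs, so $\R_{\IRR}$ is computed in polynomial time in the size of $\IRR(\AF)$ and $\AF$. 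Crucially, the size of $\AF_{\IRR}$ is itself polynomial in the input, so any algorithm running in polynomial delay relative to $\AF_{\IRR}$ is also polynomial delay relative to $\IRR(\AF)$ and $\AF$.

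Next I would invoke the algorithm of Johnson \textit{et al.}~\cite{johnson1988generating}, which enumerates the maximal independent sets (equivalently, naive sets) of a graph with polynomial delay and polynomial space. Applied to the attack graph of $\AF_{\IRR}$, it outputs each $\I \in \NAIV(\AF_{\IRR})$ in turn. For each emitted $\I$, I would compute $S = \bigcup \I$ and output $S$ as a preferred extension. By Lemma~\ref{lem:bijection}, the map $\I \mapsto \bigcup \I$ is a bijection onto $\PREF(\AF)$, so this procedure enumerates each preferred extension exactly once, with no duplicates and no omissions. Computing $\bigcup \I$ costs at most $\card{\IRR(\AF)} \cdot \card{\A}$ time per solution, which is polynomial and does not affect the delay bound; likewise the extra space is polynomial, preserving the polynomial-space guarantee.

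The only subtlety to verify is that interposing the union map between successive outputs of Johnson \textit{et al.}'s algorithm does not break the delay: since the per-solution post-processing is polynomial in the input size and is done independently for each naive set, the delay between two consecutive preferred extensions remains bounded by a polynomial in $\card{\IRR(\AF)} + \card{\A} + \card{\R}$. I expect no genuine obstacle here, as the result is essentially a direct corollary of Lemma~\ref{lem:bijection} together with the cited enumeration algorithm; the main point to state carefully is simply that the bijection is computable within the required resource bounds and that $\IRR(\AF)$ is supplied as part of the input, as the corollary's phrasing ``in the size of $\IRR(\AF)$ and $\AF$'' makes explicit.
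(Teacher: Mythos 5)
Your proposal is correct and follows essentially the same route as the paper's proof: construct $\AF_{\IRR}$ in polynomial time from $\AF$ and the given $\IRR(\AF)$, run the enumeration algorithm of Johnson \textit{et al.} on $\AF_{\IRR}$, and replace each output $\I \in \NAIV(\AF_{\IRR})$ by $\bigcup \I$ on the fly, with Lemma~\ref{lem:bijection} guaranteeing that every preferred extension is produced exactly once and the polynomial per-solution post-processing preserving the delay and space bounds. One small remark: the reason testing $S_1 \cup S_2 \in \ADM(\AF)$ reduces to a conflict-freeness check is that the union of two self-defending sets is self-defending (not merely that $S_1 \cup S_2 \subseteq \A$), but this imprecision is harmless since admissibility can in any case be decided in polynomial time in the size of $\AF$.
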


\begin{proof}
	We describe the algorithm.
	First, we construct the argumentation framework $\AF_{\IRR} = \langle \IRR(\AF), 
	\R_{\IRR} \rangle$ in polynomial time in the size of $\IRR(\AF)$ and $\AF$.
	Then, we use the algorithm in \cite{johnson1988generating} to 
	list $\NAIV(\AF_{\IRR})$.
	However, instead of outputting a subset $\I$ of $\IRR(\AF)$, we 
	compute the set $\bigcup \I$.
	Due to Lemma \ref{lem:bijection}, each preferred extension of $\AF$ will be given 
	exactly once.
	As computing $\I$ is done in polynomial time in the size of $\IRR(\AF)$ and $\AF$, 
	the 
	polynomial delay and space of the whole algorithm follows.
\end{proof}

These results based on lattices and implications 
offer a new strategy for computing the preferred extensions of an argumentation framework 
$\AF$: (i) compute $\IS$ \cite{elaroussi:hal-03184819}, (ii) construct $\IRR(\AF)$, and 
(iii) apply Corollary \ref{cor:poly-delay} to find $\PREF(\AF)$.
In general, problem (ii) is open and relates to hypergraph dualization 
\cite{khardon1995translating}.
Yet, there are several classes of closure systems where the task can be conducted in 
polynomial time \cite{beaudou2017algorithms, wild2017joy}, such as distributive and 
meet-semidistributive closure-systems, or 
closure systems associated to symmetric argumentation frameworks (being Boolean).

\section{Conclusion and further works}

In this paper we studied the problem of listing the preferred extensions of an 
argumentation framework, being a hard problem \cite{dimopoulos1996graph}.
We identified cases where this task can be reduced to the enumeration of the naive sets 
of a possibly different framework. 
As this latter problem can be solved with polynomial delay and space 
\cite{johnson1988generating}, our contribution sheds the light on new properties making 
the enumeration of preferred extensions tractable.
These properties pave the way for future research, which we will briefly discuss now.

Being naive-bijective is perhaps the most direct way to relate preferred extensions to 
naive sets.
Even though this property is hard to recognize in general, our characterization provides 
new insights into the structure of naive-bijective frameworks.
For future work it will be interesting to study this characterization in conjunction with 
other properties from particular classes of argumentation frameworks.
By doing so, one may be able to identify further cases where the problem \csmc{NAF} can 
be solved in polynomial time, as we did with bounded in-degree argumentation frameworks.

As for admissible-closed argumentation frameworks, we showed that they are 
naive-recasting. 
In fact, an admissible-closed framework $\AF$ can even be recasted in a framework $\AF'$ 
on the same set of arguments.
This implies that $\AF'$ has polynomial size with respect to $\AF$.
Thus, being able to compute $\AF'$ in polynomial time yields another polynomial delay and 
space algorithm to list the preferred extensions of $\F$.
As such, the open question of Section \ref{sec:admissible-closed} about the complexity of 
finding $\AF'$ is a particularly important part of continuing studies.
Identifying cases where \csmc{ACAF} can be solved in polynomial time, much as \csmc{NAF} 
for naive-bijective frameworks, is another direction for future investigation.

%Open questions that remain unanswered in this paper, like the one in Section 
%\ref{sec:admissible-closed}, are a part of continuing studies. Another direction for 
%future investigation
%is exploiting the findings of this study to describe further new classes of 
%argumentation 
%frameworks where the extensions may be efficiently enumerated.

At last, irreducible self-defending sets are a new point of view on the structure of 
argumentation frameworks.
The results from Section \ref{sec:irreducible} show that a small portion of 
self-defending sets are sufficient to efficiently compute the preferred extensions of a 
given framework.
Thus, these results suggest to study argumentation frameworks not only from their graph 
perspective, but also from the lattice and implicational point of view.
Specifically, finding properties of lattices and argumentation frameworks where 
irreducible sets can be computed efficiently is an intriguing question for future work. 

\paragraph{Acknowledgements} 
We are grateful to reviewers along with Oliveiro Nardi, Michael Bernreiter and 
Matthias König for their useful remarks, and for pointing us mistakes especially in 
Theorem 3.1 and Proposition 1.
The first and third authors have been funded by the CMEP 
Tassili project: ``Argumentation et jeux: Structures et Algorithmes'', Codes: 
46085QH-21MDU320 PHC, 2021-2023.
The last author has been funded by the CNRS, France, ProFan project. 
This research is also supported by the French government IDEXISITE initiative 
16-IDEX-0001 (CAP 20-25).

\bibliographystyle{alpha}
\bibliography{biblio}

\end{document}